\newcommand\cA{{\mathcal A}}
\newcommand\cC{{\mathcal C}}
\newcommand\cD{{\mathcal D}}
\newcommand\cF{{\mathcal F}}
\newcommand\cG{{\mathcal G}}
\newcommand\cL{{\mathcal L}}
\newcommand\cN{{\mathcal N}}
\newcommand\cO{{\mathcal O}}
\newcommand\cS{{\mathcal S}}
\newcommand\cU{{\mathcal U}}
\newcommand\cV{{\mathcal V}}
\newcommand\cX{{\mathcal X}}
\newcommand\cY{{\mathcal Y}}
\newcommand\fF{{\mathfrak F}}
\newcommand\fG{{\mathfrak G}}
\newcommand\fR{{\mathfrak R}}
\def\ft{\mathfrak{t}}
\def\ff{\mathfrak{f}}
\def\fg{\mathfrak{g}}
\def\fn{\mathfrak{n}}
\def\fs{\mathfrak{s}}
\newcommand\bA{{\mathbb A}}
\newcommand\CC{{\mathbb C}}
\newcommand\GG{{\mathbb G}}
\newcommand\HH{{\mathbb H}}
\newcommand\PP{{\mathbb P}}
\newcommand\QQ{{\mathbb Q}}
\newcommand\RR{{\mathbb R}}
\newcommand\VV{{\mathbb V}}
\newcommand\bV{{\mathbb V}}
\newcommand\ZZ{{\mathbb Z}}
\newcommand\BBG{{\widetilde{\Gamma}}}
\def\BV{\mathbf{V}}
\def\BE{\mathbf{E}}
\newcommand{\Extm}{\mathrm{Ext}_{\textup{MHS}}}
\newcommand{\Homm}{\mathrm{Hom}_{\textup{MHS}}}
\newcommand{\codim}{\mathrm{codim}}
\newcommand{\bs}{\backslash}
\newcommand{\sm}{\setminus}
\newcommand{\complex}{\mathbb{C}}
\newcommand{\re}[1]{\mathfrak{#1}}
\newcommand{\im}[1]{\mathfrak{#1}^*}
\def\d{\partial}
\def\e{\epsilon}
\def\gr{\mathrm{Gr}}
\def\ay{\mathbf{i}}
\def\ANF{\mathrm{ANF}}
\def\CH{\mathrm{CH}}
\newenvironment{psmatrix}
  {\left(\begin{smallmatrix}}
  {\end{smallmatrix}\right)}
\def\ac{\alpha}
\def\bc{\beta}
\def\act{\widetilde{\ac}}
\def\Res{\mathrm{Res}}
\def\TS{\widehat{S}}
\def\BS{\overline{S}}
\def\TG{\widehat{\Gamma}}
\def\sD{\mathscr{D}}
\def\acto{\widetilde{\ac_1}}
\def\actr{\widetilde{\ac_r}}
\numberwithin{equation}{section}
\numberwithin{figure}{section}
\theoremstyle{plain}
\newtheorem{thm}{\protect\theoremname}[section]
  \theoremstyle{definition}
  \theoremstyle{remark}
  \newtheorem{rem}[thm]{\protect\remarkname}
  \theoremstyle{plain}
  \newtheorem{prop}[thm]{\protect\propositionname}
  \theoremstyle{plain}
  \newtheorem{cor}[thm]{\protect\corollaryname}
  \theoremstyle{plain}
  \newtheorem{claim}[thm]{\protect\claimname}
  \theoremstyle{definition}
  \theoremstyle{plain}
  \newtheorem{lem}[thm]{\protect\lemmaname}
  \theoremstyle{plain}
\theoremstyle{plain}
\theoremstyle{plain}
\newtheorem*{thm*}{\protect\theoremname}
\theoremstyle{definition}
\theoremstyle{definition}
\newtheorem*{defn*}{\protect\definitionname}
\theoremstyle{definition}
\newtheorem*{thx}{Acknowledgments}
  \providecommand{\corollaryname}{Corollary}
  \providecommand{\claimname}{Claim}
  \providecommand{\definitionname}{Definition}
  \providecommand{\examplename}{Example}
  \providecommand{\lemmaname}{Lemma}
  \providecommand{\propositionname}{Proposition}
 \providecommand{\problemname}{Problem}
  \providecommand{\remarkname}{Remark}
\providecommand{\theoremname}{Theorem}
\providecommand{\conjecturename}{Conjecture}
\author{RJ Acu\~na}
\address{Shanghai Institute for Mathematics and Interdisciplinary Sciences (SIMIS), Shanghai, 200433, China; and Research Institute of Intelligent Complex Systems, Fudan University, Shanghai, 200433, China}
\email{rj@simis.cn}
\author{Devin Akman}
\address{Department of Mathematics, Washington University in St.~Louis, 1 Brookings Drive, St.~Louis, MO 63130, USA}
\email{akman@wustl.edu}
\author{Matt Kerr}
\address{Department of Mathematics, Washington University in St.~Louis, 1 Brookings Drive, St.~Louis, MO 63130, USA}
\email{matkerr@wustl.edu}
\title{Real regulator maps with finite 0-locus}
\subjclass[2020]{Primary: 14C30, 14D07, 32G20}
\keywords{variation of Hodge structure, normal function, torsion locus, Zilber-Pink conjecture, o-minimal structure, Laurent polynomial, A-polynomial, higher Chow group}
\begin{document}

\begin{abstract}
A Laurent polynomial in two variables is tempered if its edge polynomials are cyclotomic.  Variation of coefficients leads to a family of smooth complete genus $g$ curves carrying a canonical algebraic $K_2$-class over a $g$-dimensional base $S$, hence to an extension of admissible variations of MHS (or normal function) on $S$.  We prove that the $\RR$-split locus of this extension is finite.  Consequently, the torsion locus of the normal function and the $A$-factor locus for the family of curves are also finite.
\end{abstract}
\maketitle

%%%%%%%%%%%%%%%%%%%%

\section{Introduction}

Hodge theory provides the basic interface between the algebraic and transcendental worlds in modern complex geometry. To a family $\pi\colon \cX\to \cS$ of smooth projective varieties, it associates a polarized variation of Hodge structure (VHS), or equivalently a period mapping \cite{Gr}.  To an algebraic cycle on such a family, homologous to zero on fibers, it associates an admissible normal function \cite{Sa}.  This can be viewed either as a holomorphic section of a bundle of generalized Jacobians, or as an extension of the original VHS by a Tate object --- the simplest kind of variation of mixed Hodge structure (VMHS).  We can distinguish two kinds of normal functions, ``classical'' and ``higher'', with the former arising from cycles in the usual Chow group of $\cX$, and the latter from higher Chow cycles in the sense of Bloch \cite{Bl}.

A \emph{special subvariety} of $\cS$ is a component of the (Hodge) locus where a given mixed variation acquires extra rational Hodge tensors, such as when a normal function takes torsion values.  Although it is known that the special locus is always a countable union of algebraic varieties \cite{CDK,BP,BPS}, questions remain about the finiteness and field of definition of these components, especially in the \emph{nonclassical} case where Griffiths transversality enters.  In particular, according to the \emph{Zilber-Pink conjecture} for VMHS \cite{BU}, the maximal \emph{atypical} special subvarieties (see \S\ref{S1.3}) should be finite in number, with tame geometry providing strong evidence in the pure case \cite{BKU}.  What those techniques have so far been unable to produce are finiteness results for \emph{0-dimensional} components of the Hodge locus.

In this article we address a compelling (and nonclassical) case of the Zilber-Pink conjecture for \emph{higher} normal functions, where the VHS of a family of curves is extended by a Tate object in weight $4$.  This sort of VMHS arises in many contexts, including $K_2$ of curve families \cite{DK}, Mahler measure of 2-variable polynomials \cite{FRV}, local mirror symmetry and Hori-Vafa models \cite{BKV}, and $A$-polynomials of knot complements \cite{GM}.  For a generically nontorsion normal function of this type, it turns out that all components of the torsion locus are atypical, and under additional assumptions, of dimension zero.  One may then argue (cf.~\S\ref{S1.3}) that Zilber-Pink predicts the torsion locus is a finite set of points.  Our main results below confirm this prediction; in fact, we prove something stronger, as we now describe in detail.

\subsection{Statement of results}

Let $S$ be a quasi-projective complex $d$-manifold,\footnote{That is, $S$ denotes the complex points of a smooth complex quasi-projective variety.} carrying a nonzero integral polarized variation of Hodge structure $\mathbf{V}=(\cV,\cF^{\bullet},\bV,\nabla,Q)$, pure of weight and level $1$.  Here $\bV$ is the underlying $\ZZ$-local system, $\cV\,(\cong \VV\otimes \cO_S)$ the underlying holomorphic vector bundle (or its sheaf of sections), $\cF^{\bullet}$ the Hodge filtration on $\cV$, $Q$ the polarization, and $\nabla\colon \cV\to \cV\otimes \Omega^1_S$ the flat connection with kernel $\bV_{\CC}:=\bV\otimes \CC$.  Denote by $g\,(\geq 1)$ the rank of $\cF^1\cV$, so that the Hodge numbers of $\cV$ are $(g,g)$.

Consider a nontorsion admissible normal function $R\in \ANF_S(\BV(2))$, or equivalently an extension
\begin{equation}\label{E1}
0\to \BV\to \BE \to \ZZ(-2)_S\to 0
\end{equation}
of admissible $\ZZ$-VMHS \cite{SZ} on $S$ which is not split as an extension of $\QQ$-VMHS.  We may represent $R$ as a global holomorphic section of the generalized Jacobian bundle of $\BV(2)$, i.e.~of the sheaf $\cV/\VV(2)=\cV/(2\pi\ay)^2\VV$, with the horizontality property $\nabla R\in H^0(S,\cF^1\cV\otimes \Omega^1_S)$.  (We will describe the consequences of admissibility later.)  Writing $\cA$ for the sheaf of real analytic functions on $S$ and $\cV_{\RR}:=\VV\otimes \cA$, we can also consider the projection $R\mapsto \fR^*:=\mathrm{Im}(R)$ to $\ay\cV_{\RR}$, called the \emph{real regulator} of the extension.\footnote{Writing $\VV_{\RR}:=\VV\otimes_{\ZZ} \RR$ and $\cV_{\CC}:=\cV_{\RR}\otimes_{\RR} \CC$, and noting $\ZZ(2)\subset \RR$, this ``projection'' is the composition $\cV/\VV_{\ZZ(2)}\twoheadrightarrow \cV/\VV_{\RR}\twoheadrightarrow \cV/\cV_{\RR}\hookrightarrow \cV_{\CC}/\cV_{\RR}\cong \ay \cV_{\RR}$.}

The normal function $R$ defines special loci in $S$:
\begin{itemize}[leftmargin=0.5cm]
\item $Z_{\ZZ}(R):=$ $\ZZ$-split locus of $\BE$ $=$ zero-locus of $R$;
\item $Z_{\QQ}(R):=$ $\QQ$-split locus of $\BE$ $=$ torsion locus of $R$; and
\item $Z_{\RR}(R):=$ $\RR$-split locus of $\BE$ $=$ zero-locus of $\fR^*$.
\end{itemize}
Clearly we have $Z_{\RR}(R)\supseteq Z_{\QQ}(R)\supseteq Z_{\ZZ}(R)$.
A fundamental result of Brosnan and Pearlstein \cite{BP} states that $Z_{\ZZ}(R)$ is an algebraic subvariety of $S$; an immediate corollary is that $Z_{\QQ}(R)=\bigcup_{n\in \ZZ_{>0}}Z_{\ZZ}(nR)$ is a countable union of algebraic subvarieties.  But we claim that in our present scenario much more is true:

\begin{thm}\label{T1}
Suppose that $d=1$, i.e.~that $S$ is a curve.  Then $Z_{\QQ}(R)$ is finite; and if $\BV$ has trivial fixed part, then $Z_{\RR}(R)$ is also finite.
\end{thm}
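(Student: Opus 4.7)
The plan is to prove the $Z_\RR(R)$ statement first, and then deduce finiteness of $T(R)$ via a Deligne semisimplicity decomposition.

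\textbf{Finiteness of $Z_\RR(R)$ under trivial fixed part.} Since $\fR^*$ is a real-analytic section of $\ay\cV_\RR$ on the real $2$-manifold $S$, the zero locus $Z_\RR(R)$ is real-analytic, so it is either $0$-dimensional or contains a real-analytic arc. I first rule out the arc case: if $\gamma\subset Z_\RR(R)$ is a real-analytic arc parametrized by $t\mapsto z(t)$ with $z'(t)\ne 0$, then $R|_\gamma$ lifts to a real-analytic section $r\in\Gamma(\gamma,\cV_\RR)$, and the $\nabla$-flatness of $\VV(2)\subset\cV_\RR$ yields $\nabla R|_\gamma = dr$. Writing $\nabla R = R'(z)\,dz$ with $R'(z)\in\cF^1\cV(z)$ by horizontality, restriction to $\gamma$ gives
\[
R'(z(t))\,z'(t) \;=\; r'(t) \;\in\; \cF^1\cV\cap\cV_\RR.
\]
For a weight-$1$ level-$1$ VHS, $\cV=H^{1,0}\oplus H^{0,1}$ with $\overline{H^{1,0}}=H^{0,1}$, forcing $\cF^1\cV\cap\cV_\RR=0$. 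Hence $R'|_\gamma\equiv 0$, and the identity theorem applied to $R'$ as a holomorphic section of $\cF^1\cV$ on the connected curve $S$ gives $R'\equiv 0$. Thus $R$ is $\nabla$-flat and represents an element of $(\VV_\CC/\VV(2))^{\pi_1(S)}$. The trivial-fixed-part hypothesis forces $\VV_\CC^{\pi_1}=0$; the cohomology long exact sequence then embeds this group into the torsion subgroup of the finitely generated abelian group $H^1(\pi_1(S),\VV(2))$, so $R$ is a torsion NF, contradicting the hypothesis. Hence $Z_\RR(R)$ is $0$-dimensional; finiteness then follows from admissibility of $R$, which controls the behavior of $\fR^*$ near each puncture of $\bar S$ and precludes accumulation of zeros at the boundary.

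\textbf{Reduction for $T(R)$.} By Deligne semisimplicity for polarized VHS, split $\BV=\BV_0\oplus\BV'$ as polarized VHS, with $\BV_0$ the maximal constant sub-VHS and $\BV'$ of trivial fixed part; correspondingly $R=R_0+R'$ and $T(R)=T(R_0)\cap T(R')$. If $R'$ is nontorsion, the first part gives $Z_\RR(R')$ finite, so $T(R)\subseteq T(R')\subseteq Z_\RR(R')$ is finite. Otherwise $R'$ is torsion, so $R_0$ is nontorsion; it suffices to show $R_0$ is constant. Near each puncture $p$ of $\bar S$, lift $R_0|_{\Delta^*}$ through the universal cover $\mathbb H\to\Delta^*$ ($z\mapsto e^{2\pi\ay z}$) as
\[
\tilde R_0(z) \;=\; cz + \phi(t), \qquad t = e^{2\pi\ay z},
\]
with $c\in\VV_0(2)_\ZZ$ the monodromy period (integral because the local monodromy on $\BV_0$ is trivial) and $\phi:\Delta\to\cV_{0,\CC}$ holomorphic. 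Horizontality applied at $t=0$ yields $c\in\cF^1\cV_0$, but $\cF^1\cV_0\cap\cV_{0,\RR}=0$ by the same Hodge-theoretic identity used above, so $c=0$: $R_0$ extends holomorphically across $p$. Hence $\bar R_0:\bar S\to J(\BV_0(2))$ exists. Using $\cF^0\BV_0(2)=\cF^2\BV_0=0$, the target $J(\BV_0(2))=\VV_{0,\CC}/(2\pi\ay)^2\VV_0\cong(\CC^*)^{2g_0}$ is a Stein complex Lie group, so every holomorphic map from the compact projective curve $\bar S$ is coordinatewise constant. Thus $\bar R_0$ is constant; since $R$ is nontorsion, the constant value is nontorsion, and $T(R_0)=T(R)=\emptyset$.

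\textbf{Main obstacle.} The most delicate remaining step is promoting $Z_\RR(R)$ from $0$-dimensional to finite by ruling out accumulation of zeros of $\fR^*$ at the boundary $\bar S\setminus S$. This should follow from the asymptotic analysis of the real regulator via the limit MHS and the Deligne extension of the NF at each puncture, in parallel with the log-term analysis used above for $T(R_0)$. The remaining steps reduce to Hodge-theoretic linear algebra, the identity theorem, and the standard polarized-VHS semisimplicity splitting.
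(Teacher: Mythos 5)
Your proposal correctly establishes two things, but leaves a genuine gap in the third.

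\textbf{What is correct.} The interior argument for discreteness of $Z_{\RR}(R)$ in $S$ is valid and is an attractive alternative to what the paper does. You observe that on a smooth real-analytic arc $\gamma$ in the zero locus a local holomorphic lift $\tilde R$ becomes real-valued, that $\nabla$ preserves the real structure $\cV_{\RR}$, and hence $R'(z(t))z'(t)\in\cF^1\cap\cV_{\RR}=\{0\}$; the identity theorem then forces $R'\equiv 0$. Your cohomological derivation that a flat $R$ must be torsion when $\VV_{\CC}^{\pi_1}=0$ (via the kernel of $H^1(\pi_1,\VV(2))\to H^1(\pi_1,\VV_{\CC})$ being the torsion subgroup) is a correct substitute for the paper's Galois-theoretic Lemma \ref{L3.1}. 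The reduction of the $T(R)$ statement to the trivial-fixed-part case is also correct, and your route is genuinely different: you use triviality of the local monodromy on $\BV_0$ to extend $R_0$ across the punctures, then Steinness of $\VV_{0,\CC}/\VV_0(2)\cong(\CC^*)^{2g_0}$ to force $\bar R_0$ constant on a compact curve. The paper instead argues with the weights $-2,-1$ of $H^1(S,\VV_{\mathrm{fix}}(2))$ to kill the Hom group directly. Both work.

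\textbf{The gap.} What you defer in the final paragraph is not a routine step but the technical core of the theorem. After showing $Z_{\RR}(R)$ is discrete in $S$, one must rule out accumulation of its points at $\bar S\setminus S$, and your suggestion that this proceeds ``in parallel with the log-term analysis used above for $T(R_0)$'' does not go through. That analysis used \emph{triviality} of the local monodromy on $\BV_0$; the hard boundary case is precisely when the local monodromy on $\BV$ is nontrivially unipotent. In that case the multivalued lift $\tilde R$, when rewritten in the canonical extension basis $\{\widetilde{\ac_i},\bc_i\}$, produces coefficients of $\fR^*$ that are \emph{log-real-analytic} (polynomials in $\log|z|$ with real-analytic coefficients) rather than real-analytic, and the imaginary part of the period matrix $\ft^*$ itself blows up like $-\log|z|\cdot\cN$. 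The paper (\S\ref{S3.2}) must then split into cases according to whether the singularity class $\mathrm{sing}_0(R)$ vanishes, invoke the semianalytic stratification of \cite[Prop.~2.10]{BM} (Lemma \ref{L3.5}) to extract an analytic arc of putative zeros, perform real blowups to resolve it, and apply an elementary but essential result (Lemma \ref{L3.4}) controlling zero-sets of expressions $A(t)-B(t)\log|t|$ to close the argument. None of this is supplied, or even outlined concretely, in your ``Main obstacle'' paragraph; as you yourself flag, the proof is incomplete at exactly this point.
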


At any $s\in S$, we can view $\nabla R|_s$ as a map from $T_s$ to $\cF^1\cV|_s$; its rank is bounded by $\mathrm{min}(d,g)$.

\begin{thm}\label{T2}
Suppose that $d\leq g$ and $\nabla R$ has maximal rank at every point of $S$.  Then $Z_{\RR}(R)$ \textup{(}and thus $Z_{\QQ}(R)$\textup{)} is finite.
\end{thm}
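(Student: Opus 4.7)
The plan is to show that at every $s_0\in Z_\RR(R)$ the real differential $d\fR^*|_{s_0}\colon T_{S,\RR}|_{s_0}\to \ay\cV_\RR|_{s_0}$ is $\RR$-injective, which forces $Z_\RR(R)$ to be discrete in $S$, and then to upgrade discreteness to finiteness via admissibility-based boundary analysis. The injectivity step is essentially pure Hodge theory; the finiteness upgrade is the main obstacle.

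To compute the differential at $s_0$, lift $R$ locally to a holomorphic section $\tilde R$ of $\cV$, choose a real-analytic local frame $\{e_j\}$ of $\cV_\RR$ consisting of flat sections of $\VV_\RR$, and write $\tilde R=\sum_j f_j e_j$ with $f_j$ holomorphic. Then $\fR^*=\ay\sum_j\mathrm{Im}(f_j)e_j$, and for $v=\xi+\bar\xi\in T_{S,\RR}|_{s_0}$ with $\xi\in T_S^{1,0}|_{s_0}$, a direct computation (using $\bar\xi(f_j)=0=\xi(\bar f_j)$) yields
\[
d\fR^*|_{s_0}(\xi+\bar\xi)=\ay\,\mathrm{Im}\bigl(\nabla_\xi R|_{s_0}\bigr).
\]
By horizontality $\nabla_\xi R|_{s_0}\in\cF^1\cV|_{s_0}$, while a real element of $\cV_\CC|_{s_0}=\cF^1\oplus\overline{\cF^1}$ has $(1,0)$- and $(0,1)$-components that are mutually conjugate, so $\cF^1\cap\cV_\RR=0$ pointwise. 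Therefore $d\fR^*|_{s_0}(\xi+\bar\xi)=0$ forces $\nabla_\xi R|_{s_0}=0$, which by the maximal-rank hypothesis gives $\xi=0$. The $\RR$-injectivity of $d\fR^*|_{s_0}$ together with the real-analyticity of $\fR^*$ then implies $Z_\RR(R)$ is discrete and closed in $S$.

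To upgrade discreteness to finiteness, one invokes admissibility: $\BE$ extends to an admissible variation on a smooth compactification $\bar S\supset S$ with simple normal crossing boundary $D$, and standard nilpotent-orbit asymptotics give polynomial-logarithmic estimates for $\fR^*$ near $D$, in particular bounding its order of vanishing on boundary strata (since a nontorsion admissible $R$ cannot restrict to zero on an open boundary stratum). An infinite sequence of zeros in $S$ would have to accumulate at $D$, which these estimates forbid; hence $Z_\RR(R)$ is finite, and the inclusion $T(R)\subseteq Z_\RR(R)$ gives finiteness of $T(R)$ as a corollary.

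The main obstacle is the last step. Making the boundary-accumulation argument precise — for instance by curve-slicing near a hypothetical accumulation point and appealing to Theorem~\ref{T1} on the slice — requires verifying that a suitable curve $C\subset S$ satisfies the hypotheses of Theorem~\ref{T1}, and in particular that $\BV|_C$ has trivial fixed part, since the maximal-rank assumption alone does not preclude $\BV$ from carrying a nontrivial fixed subvariation when $d<g$. One therefore expects to separate off the fixed-part contribution to $R$ before slicing, or to do the boundary analysis directly in terms of the limit mixed Hodge structure associated to the normal function on each boundary stratum.
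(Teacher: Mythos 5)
Your discreteness step is correct and in fact slightly slicker than the paper's Lemma~\ref{L4.1}: computing $d\fR^*|_{s_0}(\xi+\bar\xi)=\ay\,\mathrm{Im}(\nabla_\xi R|_{s_0})$ at a zero $s_0$ and invoking $\cF^1\cap\cV_\RR=0$ together with the maximal-rank hypothesis gives $\RR$-injectivity of $d\fR^*|_{s_0}$ directly, whereas the paper first produces a positive-dimensional real-analytic manifold in the zero locus via Lemma~\ref{L3.5} and then differentiates along it. Both arguments use the same Hodge-theoretic input and are equivalent; there is no gap here.

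The gap is in the second step, which you yourself flag as ``the main obstacle'' --- and it genuinely is one, because that is where nearly all of the work in the paper's \S\ref{S4} lives. The phrase ``standard nilpotent-orbit asymptotics give polynomial-logarithmic estimates for $\fR^*$ near $D$... which these estimates forbid'' is not a proof, and the claim that ``a nontorsion admissible $R$ cannot restrict to zero on an open boundary stratum'' is beside the point (what is relevant is the singularity class, not the restriction of $R$, and the paper handles the \emph{vanishing} singularity case without any such nonvanishing hypothesis). Concretely, the imaginary part of $R$ near a codimension-$c$ boundary stratum has the form $\sum_i a^i_j\log|z_i| + (\text{bounded})$ in each coordinate $j$ of $\gr^M_2$; when $c\geq 2$ these logarithmic terms \emph{can} cancel along sequences approaching the corner, so there is no naive estimate that forbids accumulation. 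Ruling this out requires the Cattani--Kaplan theorem on constancy of the monodromy weight filtration over the interior of the monodromy cone (see \S\ref{S4.1}, \eqref{prN1}--\eqref{prN3}), which your sketch does not invoke. And in the case where $R$ is nonsingular at the boundary point, the functions $\widetilde{\fG^*_i}$ are only \emph{log}-real-analytic, so the \L ojasiewicz-type local analysis does not apply directly; the paper needs a curve-selection lemma in the o-minimal structure $\RR_{\text{an,exp}}$ (Lemma~\ref{L4.3}) to extract a real-analytic curve in the zero locus before the differential argument can be reused.

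The curve-slicing fallback does not repair this: $Z_\RR(R)$ is discrete in $S$, so an infinite subset accumulating at the boundary is a sequence of isolated points, and there is no reason a quasi-projective curve $C\subset S$ passes through infinitely many of them. Even granting trivial fixed part for $\BV|_C$, Theorem~\ref{T1} would only control $Z_\RR(R)\cap C$, which could be empty or a single point. Separating off the fixed part, as you also mention, addresses the trivial-fixed-part hypothesis but not the fundamental issue that you need to control the full zero locus in a full boundary neighborhood, not just on a slice.

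So: the first half of the argument is sound and mirrors the paper; the second half is a statement of intent rather than a proof, and the proposed routes (asymptotic bounds, curve-slicing) either lack the key ingredients (Cattani--Kaplan, o-minimality) or fail for the structural reason above.
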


\begin{rem}
As pointed out by one of the referees, these theorems are a little bit surprising because in general the real split locus is a real analytic subvariety, not a complex analytic one.  For instance, a Kummer extension $\nu_f\in\ANF_S(\ZZ(1)_S)$ corresponds to a unit $f\in \mathcal{O}^{\times}_{\text{alg}}(S)$, and $\nu_f$ is $\RR$-split where $|f|=1$.  But our hypotheses ensure that the $\RR$-split locus in our case has dimension zero.
\end{rem}

A geometric source for normal functions of the above type is that of \emph{tempered Laurent polynomials}, a notion introduced by F.~Rodriguez-Villegas \cite{FRV}; it has been applied or generalized in various contexts in \cite{DK,Ke,DKS}.  Let $\Delta\subset \RR^2$ be a polygon with integer vertices.  Label the boundary points $\d\Delta \cap \ZZ^2$ by $\ell^i$ and the $g$ interior points $\text{int}(\Delta)\cap \ZZ^2$ by $m^j$. Let 
\begin{equation}\label{eq1.2}
F_a=\sum_i c_ix_1^{\ell_1^i}x_2^{\ell_2^i}+\sum_{j=1}^g a_j x_1^{m^j_1}x_2^{m^j_2}
\end{equation}
be a family of Laurent polynomials in 2 variables, with $a_j$ free and $c_i$ constant, Newton polygon $\Delta$, and cyclotomic edge polynomials (given by the $c_i$). Write $C_a^*:=\{F_a(x_1,x_2)=0\}\subset \GG_m^2$ and $C_a:=\overline{C_a^*}\subset \PP_{\Delta}$, $S:=\bA^g\sm \Sigma$ for the locus where $C_a$ is smooth, and $\cV$ for the VHS over $S$ with fiber $H^1(C_a)$ at $a\in S$. Then some fixed multiple of the ``coordinate symbol'' $\{-x_1,-x_2\}\in \CH^2(C_a^*,2)$ lifts to a class $\xi_a\in \CH^2(C_a,2)$ for every $a\in S$. Taking images under the integral regulator/Abel-Jacobi maps
\begin{equation}\label{eq1.3}
\mathrm{AJ}\colon \CH^2(C_a,2)\to H^1(C_a,\CC/\ZZ(2))	
\end{equation}
produces $R\in \ANF_S(\cV(2))$ as above. We call this the \emph{geometric setting}.

\begin{prop}\label{P3}
In the geometric setting, $\nabla R$ has maximal rank at every point of $S$.
\end{prop}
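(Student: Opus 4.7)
The plan is to compute the derivative map $\nabla R\colon T_a S \to \cF^1 \cV|_a \cong H^0(C_a, \Omega^1_{C_a})$ explicitly at an arbitrary $a \in S$ and to recognize its image as a classical basis coming from the toric geometry of $\PP_{\Delta}$. In the geometric setting one has $d = g$ (the Newton polygon $\Delta$ has exactly $g$ interior lattice points, and the genus of a generic smooth toric curve with polygon $\Delta$ is $g$), so maximal rank of $\nabla R$ amounts to showing it is an isomorphism at every point.

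My first step is to record the standard infinitesimal formula for the derivative of the Abel--Jacobi class of the coordinate symbol in a family. For the tempered family $\{F_a = 0\}\subset \GG_m^2$ compactified to $C_a \subset \PP_\Delta$, one has, up to a universal nonzero constant,
$$\nabla_{\partial/\partial a_j} R \;=\; \omega_j \;:=\; \Res_{C_a}\!\left(\frac{\partial_{a_j} F_a}{F_a}\cdot\frac{dx_1}{x_1}\wedge\frac{dx_2}{x_2}\right) \;=\; \Res_{C_a}\!\left(\frac{x_1^{m^j_1} x_2^{m^j_2}}{F_a}\cdot\frac{dx_1 \wedge dx_2}{x_1 x_2}\right).$$
Such residue formulas are standard in this circle of ideas (cf.\ \cite{DK, FRV, Ke, DKS}); the temperedness assumption is precisely what ensures that $\xi_a$ lifts from $\CH^2(C_a^*, 2)$ to $\CH^2(C_a, 2)$, so that no boundary correction term contaminates the formula.

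The second step is to identify the residues $\omega_j$ as a Khovanskii-type basis of $H^0(C_a, \Omega^1_{C_a})$. Classically, the $g$ Poincar\'e residues indexed by the interior lattice points $m^j$ of $\Delta$ form a basis of holomorphic $1$-forms on a smooth toric curve $C_a \subset \PP_\Delta$. Since $\dim H^0(C_a, \Omega^1_{C_a}) = g$ as well, the map $T_a S = \bigoplus_j \CC\,\partial/\partial a_j \to H^0(C_a,\Omega^1_{C_a})$ given by $\partial/\partial a_j \mapsto \omega_j$ sends a basis to a basis and is therefore a linear isomorphism at every $a \in S$.

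The main obstacle, which I expect to absorb most of the actual write-up, is the clean derivation of the residue formula for $\nabla R$ in the tempered toric setting: one must verify that a chosen representative of $\xi_a$ in $K_2$ of the function field has vanishing tame symbol at every point of $C_a \smallsetminus C_a^*$ (this is exactly the cyclotomic-edge-polynomial condition), and that infinitesimal variation along the directions $\partial/\partial a_j$ attached to interior lattice points really produces the bare residue above with no correction from the toric boundary. The remaining ingredient --- that the $\omega_j$ form a basis of $H^0(C_a, \Omega^1_{C_a})$ --- is classical toric geometry and needs no new input.
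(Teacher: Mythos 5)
Your proposal follows essentially the same route as the paper: compute $\nabla_{\partial_{a_j}}R$ as the Poincar\'e residue $\Res_{C_a}\bigl(\tfrac{x_1^{m^j_1}x_2^{m^j_2}}{F_a}\tfrac{dx_1}{x_1}\wedge\tfrac{dx_2}{x_2}\bigr)$ and then invoke the classical fact (the paper cites Batyrev, you call it Khovanskii-type) that these residues indexed by interior lattice points of $\Delta$ form a basis of $H^0(C_a,\Omega^1_{C_a})$. Where the paper is more concrete than your sketch is precisely on the point you flag as the ``main obstacle'': rather than tame-symbol bookkeeping, it represents $R_a^*$ by the current $R\{-x_1,-x_2\}$ whose exterior derivative is $\tfrac{dx_1}{x_1}\wedge\tfrac{dx_2}{x_2}$ up to a $\ZZ(2)$-valued correction, notes (from Doran--van der Kamp) that this $2$-form extends holomorphically to all of $\mathcal{C}$, and then uses Griffiths transversality plus the injection $\Omega^2(\mathcal{C})\hookrightarrow\Omega^2(\mathcal{C}^*)$ to pin down $\nabla R$ with no boundary terms --- so the temperedness is consumed once, through the existence of the lift $\xi$ and the holomorphic extension, rather than via a residue-by-residue check. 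Your outline is sound, but you should close the gap you yourself identify by this current/extension argument (or an equivalent one) before the proof can be considered complete.
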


So the hypotheses of Theorem \ref{T2} hold, and in particular we have

\begin{cor}\label{C4}
In the geometric setting, $\xi_a$ is torsion for only finitely many $a\in S$.
\end{cor}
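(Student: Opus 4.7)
The plan is to combine Proposition \ref{P3} with Theorem \ref{T2}; no further real input is required. First I would observe that in the geometric setting the base $S=\bA^g\setminus\Sigma$ has dimension $d=g$, so the hypothesis $d\leq g$ of Theorem \ref{T2} is automatic. Proposition \ref{P3} asserts that $\nabla R$ has maximal rank at every point of $S$; since this rank equals $g\geq 1$ and is in particular nonzero, $R$ is automatically a nontorsion normal function, and admissibility is standard for $R$ coming from a globally defined motivic class on a smooth family. Theorem \ref{T2} therefore applies and yields that $Z_{\RR}(R)\supseteq T(R)$ is finite.

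To pass from finiteness of $T(R)$ to the statement of the corollary, I would invoke the factorization $R=\mathrm{AJ}(\xi)$ through the integral regulator \eqref{eq1.3}. Because $\mathrm{AJ}$ is a group homomorphism, it sends torsion classes to torsion classes, so
\[
\{a\in S\,:\,\xi_a\text{ is torsion in }\CH^2(C_a,2)\}\;\subseteq\; T(R)\;=\;\{a\in S\,:\,R(a)\text{ is torsion}\},
\]
and finiteness of the left-hand side follows from finiteness of the right.

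The real work, of course, lies not in this deduction but in the two ingredients: Theorem \ref{T2}, which is the nontrivial finiteness result at the heart of the paper, and Proposition \ref{P3}, whose proof demands an explicit verification that the derivative of the tempered regulator is maximally nondegenerate (presumably via a residue computation at the boundary divisors of the toric compactification $\PP_{\Delta}$). Once both are in hand, the corollary is essentially immediate, so I do not expect any obstacle at this last step.
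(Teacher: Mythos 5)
Your argument is correct and is exactly the route the paper takes: Proposition \ref{P3} verifies the maximal-rank hypothesis (with $d=g$), Theorem \ref{T2} then gives finiteness of $T(R)$, and the $\xi_a$-torsion locus sits inside $T(R)$ because $\mathrm{AJ}$ is a homomorphism. The paper leaves the last step implicit, but the deduction is the same.
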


\subsection{Outline of the paper}\label{S1.2}
We begin in \S\ref{S2} with a very explicit proof of Theorem \ref{T1} for $g=1$, to familiarize the reader with the asymptotic aspects of the method in the simplest case.    This is then generalized to arbitrary $g$ in \S\ref{S3}.  The main idea is to use transversality to constrain the differential behavior of $R$ and admissibility (together with the notions of canonical extension and singularities of normal functions) to constrain its asymptotic behavior.  For background the reader may consult \cite[\S2]{KP}.

For the proof of Theorem \ref{T2} in \S\ref{S4}, two new ingredients are needed.  The first is a classical asymptotic Hodge theory result of Cattani-Kaplan \cite{CK}, on the constancy of the monodromy weight filtration in the interior of a monodromy cone.  We use this to bound $Z_{\RR}(R)$ away from singularities of $R$.  The second comes from the theory of o-minimal structures \cite{vdDM} which has recently been highly influential in Hodge theory through works such as \cite{BBT,BKU}.  In order to deal with boundary points where $R$ is nonsingular, we make use of the $\RR_{\text{an,exp}}$-definability of functions arising from the real and imaginary parts of $R$.

In each case, the proof begins by applying a base-change to make the boundary into a normal-crossing divisor and monodromies of $\bV$ unipotent.  Since finite monodromies become trivial, this results in some boundary strata with no monodromy. (Here ``boundary'' refers to the complement of the finite cover of $S$ in its compactification.) Though the VHS and normal function\footnote{For a normal function of the type considered here (in $\ANF(\cV(2))$, for $\cV$ of level and weight $1$), one cannot have $R$ singular along a boundary stratum unless $\cV$ is singular there.  Moreover, if $R$ is singular at a boundary point, then it is in fact singular in codimension one along a divisor passing through that point.  See \S\ref{S4.1}.} can be extended across such strata, we may not have $\nabla R$ of maximal rank there and cannot expect the finiteness result to hold there either.  So these strata remain part of the boundary for the purposes of our analysis, and there are thus four local scenarios to consider (after the base change):  points at which both $R$ and $\cV$ are singular; points at which only $\cV$ is singular; boundary points at which neither is singular; and interior points. The goal is to show that about any point in the compactified cover of $S$, there is a neighborhood whose intersection with the cover of $S$ contains no point (except possibly the center) at which $\fR^*$ is zero.  

Turning to the geometric case in \S\ref{S5}, we briefly review some consequences of an explicit representation of the regulator class $R$ developed in \cite{DK,DKS}.  This is used to calculate $\nabla R$ and prove Prop.~\ref{P3}.  Finally, in Appendix \ref{AA}, we offer a more explicit analysis (generalizing that in \S\ref{S2}) to the asymptotic behavior of the real and imaginary parts of $R$ that avoids o-minimal theory.  This is an appendix because the solution seems somewhat remarkable, but not part of the main article because it only works when neither the VHS nor the normal function is singular at the boundary point in question.

\subsection{Relation to other works}\label{S1.3}
Under the hypotheses of Theorems \ref{T1}-\ref{T2}, it is straightforward to argue from \cite[Cor.~1.3]{BP} that $Z_{\QQ}(R)$ is a \emph{countable} set of points.  Moreover, some of the arguments which we phrase in terms of the properties of the integral regulator class $R$ can be rephrased as in \cite[\S4]{BP} in terms of the local normal form for the mixed variation $\BE$.

On the other hand, the conjectural framework around atypical Hodge loci \cite{Kl,BKU,BU,KlT} predicts that $Z_{\QQ}(R)$ is finite.  To explain this, we use the formulation of Zilber-Pink for $\ZZ$VMHS in \cite[Conj.~4.10]{BU}.  Associated to a mixed variation $\BE$ on $S$ is a \emph{Hodge datum} $(\cG,\cD)$, comprising its (generic) Mumford-Tate group and domain, and \emph{period map} $\Phi\colon S\to \BBG\backslash \cD$ (with $\BBG$ the image\footnote{Alternatively, one may replace $S$ by a finite cover and $\tilde{\Gamma}$ by an arithmetic subgroup of $\mathcal{G}(\QQ)$ containing the image of monodromy.} of the monodromy representation).  The special subvarieties $S'\subset S$ for $\BE$ are the irreducible components of $\Phi^{-1}(\BBG'\backslash \cD')$ for which the Hodge datum of $\BE|_{S'}$ is $(\cG',\cD')\subsetneq (\cG,\cD)$; such an $S'$ is atypical if 
\begin{equation}\label{E4}
\codim_S(S')<\codim_{\Gamma\bs\cD}(\BBG'\bs\cD')\,.
\end{equation}
The Zilber-Pink conjecture states that the atypical Hodge locus is a \emph{finite} union of \emph{maximal} such components.

Now let $\BE$, $\BV$ and $R$ be as in \eqref{E1}, where we may assume (by arguing as in \S\ref{S3.1} and deleting unnecessary factors) that $\BV$ has trivial fixed part and $R$ generically takes nontorsion values in all irreducible factors of $\BV$.  Write $(G,D)$ for the Hodge datum of $\BV$, $2g$ for its rank, and $\Gamma$ for its monodromy group.  By \cite[Cor.~2.9]{KT}, $R$ is then \emph{generic} in the sense that $\cG$ is an extension of $G$ by $\mathbb{G}_a^{2g}$, and $\BBG$ is (or enlarges to) an extension of $\Gamma$ by $\ZZ^{2g}$.  Accordingly, $\cD$ [resp.~$\BBG\backslash\cD$] is fibered over $D$ [resp.~$\Gamma\backslash D$] with fibers $\cong \CC^{2g}$ [resp.~$(\CC^*)^{2g}$].  The subdomains $\cD'\subset\cD$ corresponding to $R$ becoming torsion are the countably many sections of $\cD\to D$ taking values in $\QQ(2)^{2g}$.

For $S'$ an irreducible component of $Z_{\QQ}(R)$, this yields $\text{RHS}\eqref{E4}\geq 2g$. On the other hand, since the rank of $\nabla R$ is bounded above by $g$, \cite[Thm.~1.4]{GZ} implies that $\text{LHS}\eqref{E4}\leq g$, making $S'$ atypical.  According to Zilber-Pink, the \emph{maximal} atypical special subvarieties thus consist of finitely many components of $Z_{\QQ}(R)$, together with finitely many maximal atypical special subvarieties for $\BV$ which are not contained in $Z_{\QQ}(R)$.  Restricting $R$ to these components and inducing on dimension, one deduces that $Z_{\QQ}(R)$ has only finitely many components.

Finally, if we assume that $d\leq g$ and (if $d> 1$) that $\nabla R$ has maximal rank \emph{everywhere} on $S$, then components of $Z_{\QQ}(R)$ are (0-dimensional) points and Zilber-Pink implies that $|Z_{\QQ}(R)|<\infty$.  It is this prediction that our Theorems \ref{T1}-\ref{T2} confirm.  This contrasts with the approach via monodromic atypicality in \cite{BKU,BU}, which produces powerful results for positive-dimensional components of the atypical Hodge locus (including torsion loci of normal functions \cite{KT,GZ}), but cannot say anything about atypical special points.

There is also an intriguing connection to knot theory.  By \cite[\S4]{CCGLS} and \cite[Prop.~5.2]{GM} (see also the expository account in \cite{Ak}), the Laurent polynomial $F_a$ in \eqref{eq1.2} is a factor of an $A$-polynomial of a 3-manifold with torus boundary if and only if $\xi_a$ is torsion.  Call $F_a$ \emph{exact} if $a\in Z_{\RR}(R)$; then the exact locus contains the $A$-factor locus.  In \cite[\S4]{GM}, it was conjectured that the exact locus (hence also the $A$-factor locus) is finite in $S$, with a proof claimed for $g=1$.  However, this proof did not account for degenerations, only showing that exact polynomials are isolated in $S$. (This is insufficient since $S$ is noncompact:  see the argument at the end of \S\ref{S2}.)  In any case, Theorem \ref{T2} and Proposition \ref{P3} immediately imply the Guilloux-March\'e conjecture in generality:

\begin{cor}
In the geometric setting, $F_a$ is exact for only finitely many $a\in S$.
\end{cor}

\noindent Moreover, the second author's paper \cite{Ak} shows how to reinterpret Corollary \ref{C4}  in this context without reference to a family.  Namely, define the \emph{overgenus} of an $A$-factor to be the minimum genus of an $A$-polynomial curve containing the $A$-factor curve as an irreducible component.  Then according to [op.~cit., Thm.~4.1], there are only finitely many smooth $A$-factors with overgenus bounded by any fixed positive integer.

A brief word on notation:  for a holomorphic function $F$, we write $\fF=\mathrm{Re}(F)$ and $\fF^*=\mathrm{Im}(F)$; the same goes for holomorphic sections (or vector-valued functions), when the meaning is clear.

\begin{thx}
We thank Fran\c cois Brunault for bringing \cite{GM} to our attention, and Patrick Brosnan, Phillip Griffiths, John McCarthy, and Gregory Pearlstein for related discussions.  The authors are also grateful to the referees for very helpful comments, which raised the level of exposition.  The first author acknowledges the financial support of the Ann W.~and Spencer T.~Olin Chancellor's Graduate Fellowship; and the third author acknowledges support from the NSF via Grants DMS-2101482 and DMS-2502708, and from the Simons Foundation.  
\end{thx}

\section{Families of elliptic curves}\label{S2}

In this section we assume that $\BV$ is a weight-$1$, rank-$2$ $\ZZ$-PVHS on a curve $S$ with unipotent (possibly trivial) monodromies about points of $\Sigma=\overline{S}\sm S$, and consider an extension $\BE$ as in \eqref{E1}.  Its nonzero mixed Hodge numbers are $h^{1,0}=h^{0,1}=h^{2,2}=1$.  Let $R$ denote the corresponding admissible normal function.\footnote{Such normal functions are discussed for example in \cite[\S4]{DK} and \cite[\S4]{GKS}.}  On any simply connected open subset of $S$, $R$ has a lift to a holomorphic section $\tilde{R}$ of $\cV$.  Assuming that no such local lift is constant, we claim that any $x\in \overline{S}$ has a neighborhood $U$ such that $U\sm\{x\}\subset S$ and $\fR^*|_{U\sm\{x\}}$ is nowhere zero.  We defer other aspects of the 1-dimensional base story (such as justifying the last assumption) to \S\ref{S3}.

Identifying $U$ with a disk $\Delta=\{z\in \CC\mid |z|<\epsilon\}$, and taking $\tilde{R}$ to denote a multivalued lift on $\Delta^*$, we need to show --- after shrinking $\epsilon$ if necessary --- that $\fR^*=\mathrm{Im}(\tilde{R})$ is nowhere $0$ on $\Delta^*:=\Delta\sm\{0\}$.

The variation $\BV$ is very easy to describe on $\Delta^*$:  let $\ac,\bc$ denote a basis of a ``nearby fiber'' $V:=\bV|_{z_0}$ of the local system; we may assume that the monodromy operator $T=e^N$ sends $\ac\mapsto \ac-n\bc$ and $\bc\mapsto \bc$ for some $n\in \ZZ_{\geq 0}$.  (Dually we think of $V^{\vee}=\ZZ\langle \alpha^{\vee},\beta^{\vee}\rangle$ as $H_1$ of an elliptic curve, with the Dehn twist sending $\beta^{\vee}\mapsto \beta^{\vee}+n\alpha^{\vee}$.)  Write $\ell(z):=\tfrac{1}{2\pi\ay}\log(z)$ and $\act:=\ac+n\ell(z)\bc$, which is a \emph{single valued} section of $\cV$.  The canonical extension $\cV_e$ of $\cV$ to $\Delta$ is $\cO\langle \act,\bc\rangle$; this comes with a logarithmic connection $\nabla\colon \cV_e\to \cV_e\otimes \Omega^1_{\Delta}\langle \log \{0\}\rangle$.  A holomorphic generator of $\cF^1_e$ takes the form
\begin{equation}\label{E2.1}
\omega=\ac+\tau(z)\bc=\act+\eta(z)\bc=\ac+(\eta(z)+n\ell(z))\bc,
\end{equation}
where the multivalued function $\tau$ should be thought of as the period ratio and $\eta(0)$ as indicating the LMHS $V_{\lim}:=\psi_z\BV$ at the origin.

Now by Griffiths transversality and admissibility for $\BE$, $\tfrac{1}{2\pi\ay}\nabla_{z\d_z}\tilde{R}$ extends to a holomorphic section of $\cF^1_e$ on $\Delta$.  Let $a$ be the value at $0$ of its $\act$-coefficient; this is an integer, the \emph{singularity class} of $R$.  It is worth explaining its meaning briefly: the LMHS $E_{\lim}=\psi_z\BE$ has mixed Hodge numbers $h^{0,0}=h^{1,1}=h^{2,2}=1$.  The action of the extended monodromy logarithm (also denoted $N$) on $E_{\lim}$ is the same thing as $-2\pi\ay\Res_0(\nabla)$.  The class $\tfrac{-1}{(2\pi\ay)^2}\tilde{R}$ may be seen as the difference of Hodge and integral lifts of a generator of $\ZZ(-2)$, and so the $(-1,-1)$-morphism $N$ sends this generator in $E_{\lim}^{2,2}$ to $a\omega(0)$ in $E_{\lim}^{1,1}$ (equiv., $a\act(0)$ in $\gr^W_2 E_{\lim}$).
If $a$ is nonzero, then we can write on $\Delta^*$ (really on its universal cover)
\begin{equation}\label{E2.2}
\tilde{R}=(2\pi\ay a\log(z)+\text{holomorphic})\ac+(\cdots)\bc\,.
\end{equation}
Taking the imaginary part gives $\fR^*=(2\pi a\log|z|+\text{harmonic})\ac+(\cdots)\bc$, whence $\fR^*$ is nonzero on $\Delta^*$ (shrinking the disk as needed).

Henceforth we assume that $a=0$, i.e.~that $R$ is \emph{nonsingular} at the origin.  In this case, $N$ is zero on $E_{\lim}^{2,2}$ and $\nabla_{z\d_z}\tilde{R}$ vanishes at $0$.  That is, we can write $\nabla_{\d_z}\tilde{R}=:f\omega$ on $\Delta^*$, where $f$ extends to a holomorphic function on $\Delta$. (By the nonconstancy assumption, $f$ is not identically zero.)  Indeed, if $R$ is nonsingular,\footnote{See \cite[\S2.12]{KP} for a general discussion of asymptotic invariants of normal functions; the main point here is that if the singularity class vanishes, then the limit is defined in the Jacobian of the kernel of $N$.} then $R$ has a single-valued lift $\tilde{R}$ to $\cV_e$, with \emph{limit} $\tilde{R}(0)\in \ker(N)\subseteq V_{\lim}$.  Thus if $n>0$ (i.e.~$\cV$ is singular at $0$), then $\tilde{R}(0)=\Psi \bc$ for some $\Psi\in \CC$, since $N\act=-n\bc\notin \ker(N)$.  If $n=0$, then $\tilde{R}(0)=\Psi_{\alpha}\ac+\Psi_{\beta}\bc$ is arbitrary.

Set $k=\text{ord}_0(f)$, $F:=\int_0 f\,dz=z^{k+1}U$, $\tilde{F}:=\int_0F\tfrac{dz}{z}=z^{k+1}\tilde{U}$, and $G:=\int_0\eta f\,dz=FH_0$, where $U,\tilde{U},H_0$ are holomorphic and $U,\tilde{U}$ are units (on $\Delta$).  From this point on, the argument is different for $n=0$ and $n>0$.

If $n>0$, integrating $f\omega$ gives
\begin{align*}
\tilde{R}&= \int_0 \{f\ac+	(\eta f+n\ell(z)f)\bc\}dz\\
&=z^{k+1}U\ac+\{\Psi+z^{k+1}(UH_0+nU\ell(z)-\tfrac{n}{2\pi\ay}\tilde{U})\}\bc\\
&=z^{k+1}U\ac+\{\Psi+z^{k+1}U(H+n\ell(z))\}\bc,
\end{align*}
where $H:=H_0-\tfrac{n}{2\pi\ay}\tfrac{\tilde{U}}{U}$ is holomorphic.
Taking imaginary parts and writing $\psi:=\text{Im}(\Psi)$ gives
\begin{align*}
\fR^*:=\text{Im}(\tilde{R})=\text{Im}(z^{k+1}U)\ac+\{\psi+\text{Im}[z^{k+1}U(H+n\ell(z))]\}\bc.
\end{align*}
If $\psi\neq 0$, then $\fR^*$ is nonzero\footnote{Always after possibly shrinking $\Delta$ in this argument.} on $\Delta$ and we are done.  If $\psi=0$, then
\begin{align*}
\fR^*&=\text{Im}(z^{k+1}U)\act+\{\text{Im}[z^{k+1}U(H+n\ell(z))]-n\ell(z)\text{Im}(z^{k+1}U)\}\bc\\
&=\text{Im}(z^{k+1}U)\act+\{\text{Im}(z^{k+1}UH)-\tfrac{n}{2\pi}\log|z|\bar{z}^{k+1}\bar{U}\}\bc\\
&=\text{Im}(z^{k+1}U)\act-\tfrac{n}{2\pi}\bar{z}^{k+1}\log|z|U\left\{1-\frac{2\pi}{n}\frac{\text{Im}(z^{k+1}UH)}{\bar{z}^{k+1}\log|z|\bar{U}}\right\}\bc.
\end{align*}
Now $|H|\leq B$ for some $B>0$ on $\Delta$.  Shrinking $\Delta$ so that $|z|<e^{-4\pi B/n}$, we have $$\frac{2\pi}{n}\left|\frac{\text{Im}(z^{k+1}UH)}{\bar{z}^{k+1}\log|z|\bar{U}}\right|\leq \frac{2\pi}{n}\left|\frac{z^{k+1}UH}{\bar{z}^{k+1}\bar{U}}\right|\frac{1}{-\log|z|}\leq \frac{2\pi B/n}{-\log|z|}<\frac{1}{2},$$
and thus the coefficient of $\bc$ is nonzero on $\Delta^*$.

If $n=0$, then $\eta=\tau$ is nonzero, and integration by parts shows that\footnote{Note that this is different from the $H$ in the last paragraph.} $H:=H_0/\tau=1+\mathcal{O}(z)$ is a holomorphic unit.  We thus have $G=FH\tau$ and $\tilde{R}=(\Psi_{\alpha}+F)\ac+(\Psi_{\beta}+G)\bc$.  If the imaginary part of either constant $\Psi_{\alpha}$ or $\Psi_{\beta}$ is nonzero, we are done; if both are zero, then\footnote{Recall that we write $\fF=\mathrm{Re}(F)$, $\fF^*=\mathrm{Im}(F)$, etc.~throughout this paper.} $\fR^*=\fF^*\ac+\fG^*\bc$.  Suppose we have a set $\mathscr{S}\subset \Delta^*$ of common zeroes of $\fF^*$ and $\fG^*$ with $0$ as limit point.  We may shrink $\Delta$ so that $\text{Im}(\tau H)$ is nonzero there.  Then on $\mathscr{S}$ we have $\fG^*=\text{Im}(\tau H)\fF+\text{Re}(\tau H)\fF^*=\text{Im}(\tau H)\fF$ so that $F$ vanishes on $\mathscr{S}$.  Since $F$ is holomorphic and $G=FH\tau$, $F$ and $G$ vanish identically on $\Delta$, contradicting the local nonconstancy of $\tilde{R}$.

Thus we have shown that every $x\in\overline{S}$ has an analytic open neighborhood $U_x$ such that $U_x{\setminus}\{x\}\subset S$ and
\begin{equation}\label{E2.3}
Z_{\RR}(R)\cap (U_x{\setminus}\{x\})=\emptyset.
\end{equation}
The $\{U_x\}_{x\in\overline{S}}$ constitute an open cover of $\overline{S}$; so by compactness of $\overline{S}$, it has a finite subcover of the form $\{U_x\}_{x\in \Sigma\cup \Sigma'}$, with $\Sigma'\subset S$ finite. By \eqref{E2.3}, we have $Z_{\RR}(R)\subseteq \Sigma'$, which gives $|Z_{\RR}(R)|<\infty$ as desired.

Though it is nice to see a very explicit analytic argument once, it would clearly become unwieldy in the most general setting.  Much of the analysis is subsumed in results of Bierstone--Milman \cite{BM} and van den Dries--Miller \cite{vdDM} in what follows.

\section{One-parameter families}\label{S3}

In this section, we give a complete proof of Theorem \ref{T1}.  

\subsection{Preliminaries}\label{S3.1}

Let $\BV$ be a $\ZZ$-PVHS with Hodge numbers $(g,g)$ on a curve $S$, and $R\in \mathrm{ANF}_S(\BV(2))$ a nontorsion normal function.

\begin{lem}\label{L3.1}
If $\BV$ has trivial fixed part, then every local lift $\tilde{R}$ of $R$ is nonconstant.
\end{lem}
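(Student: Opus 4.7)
The plan is to argue by contradiction: suppose that on some simply connected open $U\subset S$ some local lift $\tilde R$ of $R$ is flat (i.e.\ constant), corresponding to a vector $v\in V_{\CC}$ in a nearby fiber $V:=\bV|_{z_0}$. The first thing I would do is upgrade the local identity $\nabla\tilde R\equiv 0$ to the global statement $\nabla R\equiv 0$ on all of $S$. Indeed, any two local lifts on a simply connected open differ by a section of the constant sheaf $\VV_{\ZZ(2)}$, which is $\nabla$-flat, so $\nabla\tilde R$ does not depend on the chosen lift and glues to a well-defined global holomorphic section of $\cF^1\cV\otimes\Omega^1_S$ (using Griffiths transversality). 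Vanishing on the nonempty open $U$ then forces vanishing on the connected curve $S$.

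Once $\nabla R\equiv 0$, every local lift of $R$ is flat, and these lifts glue to a $\pi_1(S)$-invariant class $[v]\in V_{\CC}/V_{\ZZ(2)}$; equivalently, $(T_\gamma-I)v\in V_{\ZZ(2)}$ for every $\gamma\in\pi_1(S)$. Now $\ZZ(2)=(2\pi\ay)^2\ZZ$ is a \emph{real} lattice inside $\CC$, so $V_{\ZZ(2)}\subset V_{\RR}$, and the monodromy $T_\gamma$ acts by real matrices on $V_{\CC}=V_{\RR}\oplus\ay V_{\RR}$. Taking imaginary parts of $(T_\gamma-I)v\in V_{\ZZ(2)}$ therefore yields $(T_\gamma-I)\,\mathrm{Im}(v)=0$ for every $\gamma$, so $\mathrm{Im}(v)\in V_{\RR}^{\pi_1(S)}$. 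The trivial fixed part hypothesis says precisely that $V_{\RR}^{\pi_1(S)}=0$, so $\mathrm{Im}(v)=0$ and $v\in V_{\RR}$.

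It then remains to deduce that $[v]$ is torsion in $V_{\RR}/V_{\ZZ(2)}$. The set of $\pi_1(S)$-invariants is a closed subgroup of the compact real torus $V_{\RR}/V_{\ZZ(2)}$; its Lie algebra at the identity is again $V_{\RR}^{\pi_1(S)}=0$, so the subgroup is discrete, and hence finite by compactness. Thus $[v]$ has finite order in $V_{\CC}/V_{\ZZ(2)}$, which means $R$ is torsion, contradicting the standing hypothesis.

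The only step that requires genuine care is the $(2)$-twist bookkeeping: one must use both that $V_{\ZZ(2)}\subset V_{\RR}$ (so ``imaginary part modulo the lattice'' is meaningful) and that monodromy respects the real/imaginary decomposition. After that the argument is essentially formal --- a flat admissible normal function on a connected base is automatically determined by a monodromy-invariant class, and discreteness of invariants follows the moment one knows $V_{\RR}^{\pi_1(S)}=0$ --- so I do not anticipate any serious obstacle beyond the lattice/reality identifications.
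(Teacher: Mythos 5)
Your proof is correct, but it follows a genuinely different route than the paper's. Both arguments begin the same way: a constant local lift forces every local lift to be flat, so $R$ becomes a flat section of $\VV_{\CC}/\VV_{\ZZ(2)}$ and one arrives at the monodromy condition $(T-I)v\in V_{\ZZ(2)}$ for all $T\in\Gamma$. From there the paper finishes with a Galois trick: since $R$ is nontorsion, $v\notin V_{\QQ}$, so some $\sigma\in\mathrm{Gal}(\CC/\QQ)$ moves $v$; triviality of $V_{\CC}^{\Gamma}$ then produces a $T$ with $(T-I)(v-v^{\sigma})\neq 0$, contradicting the $\mathrm{Gal}$-invariance of $(T-I)v\in V_{\ZZ}$. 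You instead exploit the reality of $V_{\ZZ(2)}=(2\pi\ay)^2 V_{\ZZ}\subset V_{\RR}$ and of the monodromy matrices: taking imaginary parts gives $\mathrm{Im}(v)\in V_{\RR}^{\Gamma}=0$, so $v$ is real; and then the $\Gamma$-invariant locus in the compact torus $V_{\RR}/V_{\ZZ(2)}$ is a closed subgroup whose Lie algebra is again $V_{\RR}^{\Gamma}=0$, hence discrete, hence finite by compactness, forcing $[v]$ to be torsion. The Galois argument is shorter and purely algebraic; your compactness argument is more analytic/geometric and avoids invoking automorphisms of $\CC/\QQ$, replacing them with the finiteness of a discrete subgroup of a torus. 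Both rest on the same essential input, that the fixed part being zero kills invariants over $\QQ$, $\RR$, or $\CC$ (these being equivalent since $\Gamma\leq\mathrm{GL}(V_{\ZZ})$), and both are complete.
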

\begin{proof}
Suppose otherwise: $R$ is nontorsion, but on some disk $D\subset S$, $\tilde{R}$ is the flat continuation of a vector $(2\pi\ay)^2 v \in V_{\CC}$ in the fiber of the local system $\mathbb{V}_{\CC}$ over a base point of $D$.  Now $R$ is a holomorphic section of $\mathcal{V}/\mathbb{V}_{\ZZ(2)}$, so by analytic continuation its local lifts are all flat, which forces $R$ to be a section of $\mathbb{V}_{\CC}/\mathbb{V}_{\ZZ(2)}$.  Equivalently, we have $(T-I)v\in V_{\ZZ}$ for every $T\in \Gamma\leq \mathrm{GL}(V_{\ZZ})$, where $\Gamma$ is the monodromy group of $\mathbb{V}$.  Now since $R$ is nontorsion, $v\notin V_{\QQ}$; accordingly, let $\sigma\in \mathrm{Gal}(\CC/\QQ)$ be such that $v -v^{\sigma}\neq 0$.  By triviality of the fixed part, $V_{\CC}^{\Gamma}=\{0\}$; so for \emph{some} $T\in \Gamma$ we have $T(v-v^{\sigma})\neq v-v^{\sigma}$, hence $((T-I)v)^{\sigma}\neq (T-I)v$.  This is a contradiction.
\end{proof}

Now assume that we have shown in the trivial fixed part case that $Z_{\RR}(R)$ is finite.  Dropping the fixed-part hypothesis, we claim that $Z_{\QQ}(R)$ is finite.  By the Theorem of the Fixed Part, we can decompose $\BV_{\QQ}=\BV_{\text{fix}}\oplus \BV_{\text{var}}$ as a direct sum in the category of $\QQ$-PVHS, where $\BV_{\text{fix}}$ is constant and $\BV_{\text{var}}$ has trivial fixed part; decompose $R=R_{\text{fix}}+R_{\text{var}}$ accordingly.  If $R_{\text{var}}$ is nontorsion, then by assumption $Z_{\RR}(R_{\text{var}})\supseteq Z_{\QQ}(R)$ is finite.  So suppose that $R_{\text{var}}$ is torsion, and observe that $R_{\text{fix}}$ lives in $\ANF_S(\BV_{\text{fix}}(2))$, which is an extension of $\Homm(\QQ(0),H^1(S,\VV_{\text{fix}}(2)))$ by (constant sections) $$\Extm^1(\QQ(0),H^0(S,\VV_{\text{fix}}(2)))\cong V_{\text{fix},\CC}/V_{\text{fix},\QQ(2)}.$$  But since $H^1(S,\VV_{\text{fix}}(2))\cong H^1(S)\otimes V_{\text{fix}}(2)$ has weights $-2$ and $-1$, it has no Hodge $(0,0)$-classes and the Hom-space is trivial.  Hence $R_{\text{fix}}$ is constant and  nontorsion (since $R$ is nontorsion), whence $Z_{\QQ}(R)$ is empty.

Henceforth we assume $\BV$ has trivial fixed part.
We will need a general result which allows us to pass to an appropriate finite cover, both in this and the next section.  We do not need to worry about whether this introduces a nontrivial fixed part, since nonconstant local lifts of $R$ cannot become constant on the cover.

\begin{lem}\label{L3.2}
Let $\mathbf{H}$ be a $\ZZ$-PVHS on a smooth complex quasi-projective variety $Y$.	Then there exist a finite \'etale cover $\widehat{Y}\to Y$ and smooth projective compactification $\overline{Y}\supseteq \widehat{Y}$ such that $\Sigma=\overline{Y}\sm\widehat{Y}$ is a normal crossing divisor about which the base-change $\widehat{\mathbf{H}}$ has unipotent monodromies.
\end{lem}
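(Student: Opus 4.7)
The plan is to assemble three standard ingredients: resolution/compactification, the monodromy theorem for polarized variations of Hodge structure, and the Kawamata covering lemma to kill the finite-order parts of local monodromy.

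First I would produce an initial model. By Nagata compactification followed by Hironaka's resolution of singularities (applied to the boundary), there exists a smooth projective variety $\overline{Y}_0\supseteq Y$ such that $D:=\overline{Y}_0\sm Y=\bigcup_i D_i$ is a simple normal crossing divisor with smooth irreducible components. Around any point of $D$, pick a small polydisk neighborhood and let $T_i$ denote the monodromy of $\mathbf{H}$ about $D_i$.

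Second, I would invoke the monodromy theorem of Borel (see Schmid, \emph{Invent.~Math.}~22) for polarized $\ZZ$-VHS: each $T_i$ is \emph{quasi-unipotent}, i.e.~there exist $N_i,k_i\in \ZZ_{>0}$ with $(T_i^{N_i}-I)^{k_i}=0$. Set $N:=\mathrm{lcm}_i(N_i)$; then $T_i^N$ is unipotent for every $i$.

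Third, I would apply the Kawamata covering lemma (see, e.g., Kawamata, \emph{Math.~Ann.}~261, or Esnault--Viehweg, \emph{Lectures on vanishing theorems}, Lemma 3.19) to $(\overline{Y}_0,D,N)$: this produces a finite Galois cover $\pi\colon \overline{Y}\to \overline{Y}_0$, with $\overline{Y}$ smooth projective, such that $\pi^*D_i=N\cdot E_i$ for smooth divisors $E_i$ whose union $\Sigma=\bigcup_i E_i$ is simple normal crossing. Setting $\widehat{Y}:=\overline{Y}\sm \Sigma$, the restriction $\pi|_{\widehat{Y}}\colon \widehat{Y}\to Y$ is unramified, hence finite \'etale. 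Under the pullback VHS $\widehat{\mathbf{H}}:=\pi^*\mathbf{H}$, the monodromy about $E_i$ is $T_i^N$, which by Step 2 is unipotent.

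The main conceptual obstacle is the third step: naively one could just form the cyclic cover of $\overline{Y}_0$ ramified of order $N$ along $D$, but this will in general be singular at the crossings of the $D_i$, and its desingularization might alter $\widehat{Y}$ over $Y$. The Kawamata lemma is precisely designed to circumvent this by producing a smooth total space with simple normal crossing branch divisor while keeping the cover \'etale over $Y$; all other steps are routine applications of well-known results.
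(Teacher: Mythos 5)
Your overall strategy (kill the finite-order parts of the local monodromies by passing to a finite cover) is the right one, and Steps~1 and~2 are fine. The problem is in Step~3: the Kawamata covering lemma does \emph{not} produce a cover that is \'etale over $Y=\overline{Y}_0\sm D$. In Kawamata's construction (and in the Esnault--Viehweg and Lazarsfeld formulations you cite) one does not in general have $\mathcal{O}(D_i)$ divisible by $N$ in $\mathrm{Pic}(\overline{Y}_0)$, so the cyclic covers are built by writing $D_i+H_i\in|N L_i|$ for suitable auxiliary general divisors $H_i$. The resulting Galois cover $\pi\colon\overline{Y}\to\overline{Y}_0$ is then branched not only over $D$ but also over the $H_i$, which lie in the \emph{interior} $Y$. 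Consequently the restriction $\pi|_{\pi^{-1}(Y)}\colon\pi^{-1}(Y)\to Y$ is ramified along those auxiliary divisors and is not \'etale, so you cannot take $\widehat{Y}:=\pi^{-1}(Y)$ as claimed. (You also cannot simply enlarge $\Sigma$ to include the extra ramification: then $\widehat{Y}$ would only cover $Y\sm\bigcup_i H_i\subsetneq Y$, not $Y$, and the finiteness arguments in \S3--4 would be left with an uncontrolled open locus.)

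The paper sidesteps this by inverting the order of operations and working group-theoretically. Rather than compactifying first and then trying to build a ramified cover of the compactification, one first takes a finite-index \emph{neat} subgroup $\widehat{\Gamma}$ of the monodromy group $\Gamma\leq\mathrm{Aut}(H_{\ZZ},Q)$ (Borel), pulls it back to $\pi_1(Y)$, and takes the corresponding finite cover of $Y$. This is \'etale by construction, and algebraic by the Riemann existence theorem. Only \emph{afterwards} does one compactify with normal-crossing boundary (Hironaka); the quasi-unipotent monodromy theorem plus neatness then force unipotence. If you want to keep a Kawamata-flavored argument, you would need to assume or arrange the $N$-divisibility of the boundary classes in the Picard group, which is not available in general; the neat-subgroup route avoids this issue entirely.
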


\begin{proof}
Write $H_{\ZZ}$ for a fiber of $\HH$, and let $\Gamma\leq \mathrm{Aut}(H_{\ZZ},Q)\,$ be the image of the monodromy representation.  First, $\mathrm{Aut}(H_{\ZZ},Q)$ has a finite-index subgroup which is \emph{neat} \cite[17.4]{Bo}, i.e.~no element has a torsion eigenvalue (root of unity different from $1$ itself).  Let $\TG$ denote its intersection with $\Gamma$.  Second, the \'etale covering $\widehat{Y^{\text{an}}}\to Y^{\text{an}}$ of complex manifolds induced by $\TG$ (or rather, its pullback to $\pi_1(Y^{\text{an}})$) is in fact algebraic by the ``Riemann existence theorem'' \cite[XII]{Gro}.  Let $\widehat{Y}$ denote the underlying variety.  Third, by resolution of singularities \cite[p.~146]{Hi}, $\widehat{Y}$ admits a smooth compactification $\overline{Y}$ with normal-crossing complement.  The monodromies of the base-change $\widehat{\mathbf{H}}$ are then unipotent by combining the monodromy theorem \cite[3.1]{Gri} with neatness of $\TG$.
\end{proof}

Let $\TS\subset \BS$ be the cover of $S$ guaranteed by Lemma \ref{L3.2}, and write $\widehat{R}$ for the base-change of $R$.  So by covering $\BS$ with disks $U\cong \Delta$ centered at every point (with $\Delta^*\subset S$), we are reduced as in \S\ref{S2} to the following scenario:
\begin{itemize}[leftmargin=0.5cm]
\item a $\ZZ$-PVHS $\BV$ on $\Delta^*$ with Hodge numbers $(g,g)$ and unipotent monodromy operator $T$; and
\item a nontorsion normal function $\widehat{R}|_{\Delta^*}\in \ANF_{\Delta^*}(\BV(2))$ with nonconstant local lifts.\footnote{This will simply be denoted by $R$ in the paragraphs that follow.}
\end{itemize}
Now assume we have shown the following:
\begin{claim}\label{C3.3}
After shrinking $\Delta$, $\mathrm{Im}(\widehat{R}|_{\Delta^*})$ is nowhere zero on $\Delta^*$.  
\end{claim}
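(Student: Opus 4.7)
The plan is to generalize the explicit asymptotic analysis of \S\ref{S2} to arbitrary rank $2g$. Work in the canonical extension $\cV_e$ of $\cV$ across $0$, with its logarithmic connection. Fix an integral basis of a nearby fiber $V_{\lim}$ adapted to the monodromy weight filtration $W_\bullet$ of $N$ centered at weight $1$, so that $W_0 = \mathrm{Im}(N)$, $W_1 = \ker N$, $W_2 = V_{\lim}$, and the single-valued sections $\widetilde{e_i} := e^{-N\ell(z)}e_i$ trivialize $\cV_e$ over $\Delta$. For a local lift $\tilde R$ on $\Delta^*$, admissibility and Griffiths transversality make $\tfrac{1}{2\pi\ay}\nabla_{z\d_z}\tilde R$ a holomorphic section of $\cF^1_e$ over $\Delta$; define the singularity $\sigma(R)$ as the projection of its value at $0$ to $\gr^W_2 V_{\lim}$, which is an integral Hodge class by admissibility of the integral extension $\BE$.

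\textbf{Case 1: $\sigma(R) \neq 0$.} Then $\tilde R$ contains a log-term $2\pi\ay \log(z) \cdot v$ for some integer vector $v \in V_{\lim}$ with $v \bmod W_1 = \sigma(R) \neq 0$. Taking the imaginary part, $\fR^*$ acquires a $2\pi \log|z| \cdot v$ contribution, which grows unboundedly as $z \to 0$ and dominates the $O(1)$ harmonic corrections; hence $\fR^* \neq 0$ on $\Delta^* \cap \{|z| < \epsilon\}$ for small $\epsilon$.

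\textbf{Case 2: $\sigma(R) = 0$.} Then $\tilde R$ extends as a single-valued section of $\cV_e/\VV_{\ZZ(2)}$ across $0$, with limit $\Psi \in V_{\lim}/\VV_{\ZZ(2)}$ lifting to $\ker N$. If $\psi := \mathrm{Im}(\Psi) \neq 0 \bmod \VV_\RR$, continuity gives the result. Otherwise ($\psi = 0$), split on whether $N = 0$. When $N \neq 0$, nonconstancy of $\tilde R$ (Lemma \ref{L3.1}) forces the $\gr^W_2$-coefficients of $\tilde R - \Psi$ to be not identically zero; their leading behavior couples with the $\log|z|$-terms arising from the non-flatness of the $\widetilde{e_i}$-sections in $W_2$-directions to give an unbounded $\log|z|$-contribution to the $\gr^W_0$-coefficients of $\fR^*$, generalizing the ``$n > 0$, $\psi = 0$'' computation of \S\ref{S2}. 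When $N = 0$, $\tilde R$ is a genuine holomorphic section of $\cV$ on $\Delta$, and the horizontality $\nabla_{\d_z}\tilde R \in \cF^1$ expresses the $\widetilde{e_i}$-coefficients of $\tilde R - \Psi$ via a $g$-tuple of holomorphic primitives $F_1, \ldots, F_g$ and the period matrix $\tau(z)$, promoting the scalar identity $G = FH\tau$ of \S\ref{S2} to a $g \times g$ matrix identity. If $\fR^*$ vanishes on a sequence $z_n \to 0$, nondegeneracy of $\mathrm{Im}\,\tau(0)$ (Riemann bilinear relations from the polarization $Q$) decouples the $2g$ real-analytic equations into the simultaneous vanishing of the $F_i$ on $\{z_n\}$, hence identically on $\Delta$, contradicting nonconstancy of $\tilde R$.

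The main obstacle is the $N = 0$, $\psi = 0$ subcase, where the scalar identity $G = FH\tau$ must be promoted to a matrix argument and the polarization is used to extract a holomorphic vanishing statement from the real-analytic conditions. The $N \neq 0$, $\psi = 0$ subcase is also delicate, requiring careful bookkeeping through the weight filtration to ensure the $\log|z|$-term survives in the $\gr^W_0$-component of $\fR^*$ after all cancellations from the coupling between $\gr^W_2$- and $\gr^W_0$-coefficients.
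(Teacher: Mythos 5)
Your proposal attempts a direct generalization of the explicit \S\ref{S2} analysis, keeping the full case split on $\psi=0$ versus $\psi\neq 0$ and $N=0$ versus $N\neq 0$.  The paper deliberately abandons this route in \S\ref{S3.2} --- see the remark at the end of \S\ref{S2} (``clearly become unwieldy'') and the footnote in \S\ref{S3.2} stating explicitly that it does \emph{not} split on $N=0$ vs.~$N\neq 0$ --- and instead uses a real-analytic stratification (Lemma \ref{L3.5}) to reduce to vanishing of $\fR^*$ along a real-analytic arc $\gamma$ through $0$, then differentiates along $\gamma$ and applies Lemma \ref{L3.4} to the log-linear combination that appears.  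Your Case 1 and the $\psi\neq 0$ branch of Case 2 are fine; the two $\psi=0$ branches, which you yourself flag as the main obstacles, are genuinely incomplete.

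For $N\neq 0$, $\psi=0$, the \S\ref{S2} computation hinged on writing $\tilde R = z^{k+1}U\,\ac + z^{k+1}U(H+n\ell(z))\,\bc$, i.e.~on factoring a \emph{single} holomorphic unit $z^{k+1}U$ simultaneously out of both coefficients.  That factoring is intrinsically one-dimensional.  When $g>1$ the coefficients $F_1,\dots,F_g$ may vanish to different orders, and the $\bc_i$-coefficients involve the sums $\sum_j\cN_{ij}F_j$ (not a single $nF$), so there is no common unit to pull out and the assertion that ``the $\log|z|$-contribution survives after all cancellations'' is exactly the thing that needs a proof --- the cancellation pattern is not controlled by ``leading behavior'' alone.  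For $N=0$, $\psi=0$, ``promoting $G=FH\tau$ to a matrix identity'' is precisely the content of Lemma \ref{LA2} in Appendix \ref{AA}: it holds only after a resolution of singularities rendering the $F_j$ monomial up to units, and the remark following that lemma shows the construction fails without a star-shapedness hypothesis.  You invoke the matrix identity as if it were immediate, which it is not.  By contrast, the paper's argument --- stratify the zero locus, select an analytic arc $\gamma$ into it, compute $\tfrac{d}{dt}\gamma^*\fR^*$, isolate the $\ft^*(\ff\;-\ff^*)\gamma'$ term, split off the $\log|t|$ piece of $\ft^*$, and apply Lemma \ref{L3.4} to conclude $(F_x\;F_y)\gamma'\equiv 0$ --- handles all four of your subcases uniformly without any of these difficulties.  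You should adopt that mechanism rather than try to repair the two incomplete branches of your plan.
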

\noindent Then compactness of $\BS$ produces a finite subcover; and since the centers of this finite set of disks must contain $Z_{\RR}(\widehat{R})\cup \Sigma$, $Z_{\RR}(\widehat{R})$ (and thus $Z_{\RR}(R)$) is finite, proving Theorem \ref{T1}.

\subsection{The asymptotic part}\label{S3.2}

In this subsection we prove Claim \ref{C3.3}.  We begin with two lemmas on real analytic functions, which will also be used in the next section.

\begin{lem}\label{L3.4}
If $A(t)$ and $B(t)$ are real analytic functions on $(-\e,\e)$, and $A(t)-B(t)\log|t|$ has zeroes converging to $t=0$, then $A$ and $B$ are identically zero.  	
\end{lem}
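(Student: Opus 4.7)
The plan is to exploit the fact that a real analytic function on $(-\varepsilon,\varepsilon)$ is either identically zero or has a well-defined finite order of vanishing at $0$, and then rule out all nontrivial cases by comparing polynomial decay to logarithmic growth along the sequence of zeroes.

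First I would dispose of the degenerate cases. Let $\{t_n\}\subset (-\varepsilon,\varepsilon)\setminus\{0\}$ be a sequence of zeroes of $A(t)-B(t)\log|t|$ with $t_n\to 0$. If $B\equiv 0$, then $A(t_n)=0$ for all $n$, so by the identity theorem for real analytic functions $A\equiv 0$ as well. Conversely, if $A\equiv 0$, then $B(t_n)\log|t_n|=0$; since $|t_n|\to 0$ we have $\log|t_n|\neq 0$ for $n$ large, forcing $B(t_n)=0$ eventually, and again the identity theorem gives $B\equiv 0$. So either both are identically zero (which is what we want), or neither is.

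Assume for contradiction that both $A$ and $B$ are nonzero analytic functions. Then we may write $A(t)=t^a\tilde{A}(t)$ and $B(t)=t^b\tilde{B}(t)$ with $a,b\in\ZZ_{\geq 0}$ and $\tilde{A}(0),\tilde{B}(0)\neq 0$. Substituting into the relation $A(t_n)=B(t_n)\log|t_n|$ gives
\begin{equation*}
t_n^{a-b}\,\tilde{A}(t_n)=\tilde{B}(t_n)\log|t_n|.
\end{equation*}
I would then examine the asymptotic behavior of the two sides as $n\to\infty$. The right-hand side tends to $\pm\infty$ since $\tilde{B}(0)\neq 0$ and $\log|t_n|\to -\infty$. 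If $a\geq b$, then the left-hand side stays bounded (tending to $\tilde{A}(0)$ if $a=b$ or to $0$ if $a>b$), giving a contradiction. If $a<b$, rewrite as $\tilde{A}(t_n)=t_n^{b-a}\tilde{B}(t_n)\log|t_n|$; the left-hand side tends to $\tilde{A}(0)\neq 0$, while the right-hand side tends to $0$ because any positive power of $|t_n|$ beats $\log|t_n|$, again a contradiction.

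The argument is entirely elementary and the only ``obstacle'' is bookkeeping the three cases for $a$ vs.\ $b$; the essential input is that $t^k\log|t|\to 0$ for any $k>0$, together with the identity theorem. No compactness, covering, or Hodge-theoretic machinery is required here; this lemma is purely a statement about growth rates of real analytic functions against the logarithmic singularity.
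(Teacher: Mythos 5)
Your proof is correct and follows essentially the same route as the paper's: dispose of the degenerate cases (you via the identity theorem for one-variable real analytic functions, the paper via local connectedness of analytic sets, which amounts to the same thing in one variable), then factor out the orders of vanishing and compare polynomial decay against logarithmic blow-up along the sequence, splitting into the same two cases ($a\geq b$ giving $t^c u(t)=\log|t|$ with $c\geq 0$, and $a<b$ giving $u(t)=t^c\log|t|$ with $c>0$). No gap; your writeup is just a more explicit unpacking of the paper's terse two-case argument.
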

\begin{proof}
If one is identically zero, the other must be too by the local connectedness of analytic varieties \cite[Cor.~2.7]{BM}. If neither is identically $0$, divide into 2 cases, which amount to $u(t)=t^clog|t|$ ($c>0$) and $t^c u(t)=\log|t|$ ($c\geq 0$) with $u$ an analytic unit. Neither has solutions on $(-\e,0)\cup (0,\e)$ for sufficiently small $\e>0$. Contradiction.
\end{proof}

\begin{lem}\label{L3.5}
Let $A_1,\ldots,A_p$ be real analytic functions on a neighborhood $\cU$ of $0$ in $\RR^n$, and set $X:=\{x\in \cU\mid A_i(x)=0,\;i=1,\ldots,p\}$.  Let $W\subset \cU$ be another closed analytic subset containing $0$, and set $X^-:=X\setminus X\cap W$.  Then either (i) it is possible to shrink $\cU$ so that $X^-$ becomes empty, or (ii) $X^-$ contains a connected real analytic \emph{manifold} $Y$ of positive dimension.
\end{lem}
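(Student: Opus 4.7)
The plan is to realize $X^-$ as a semi-analytic set and then invoke the Lojasiewicz structure theorem for such sets. To that end, observe that $X = \{A_1 = \cdots = A_p = 0\}$ is real analytic and, after shrinking $\cU$ if necessary, $W$ may be written locally as $\{C_1 = \cdots = C_q = 0\}$ for real analytic functions $C_j$. Then the identity
\[
X^- = X \cap (\cU \setminus W) = \bigcup_{j=1}^{q} \{A_1 = \cdots = A_p = 0,\ C_j^{2} > 0\}
\]
exhibits $X^-$ as a finite union of basic semi-analytic pieces, hence as a semi-analytic set in its own right.

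Next I would invoke the Lojasiewicz stratification theorem: in some relatively compact neighborhood $\cU_0 \subset \cU$ of $0$, the set $X^- \cap \cU_0$ decomposes into \emph{finitely many} connected semi-analytic strata $M_1, \ldots, M_N$, each of which is a real analytic submanifold of $\cU_0$. The proof then splits according to the maximum dimension of a stratum. If every $M_i$ has $\dim M_i = 0$, then $X^- \cap \cU_0$ is a finite set of isolated points; since $0 \in W$ forces $0 \notin X^-$, none of these points equals $0$, and shrinking $\cU$ to a small enough ball about $0$ inside $\cU_0$ that avoids this finite set makes $X^-$ empty, establishing alternative (i). Otherwise some $M_i$ has positive dimension, and taking $Y := M_i$ yields the connected positive-dimensional real analytic manifold demanded by alternative (ii).

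The main point of care will be the finiteness of the stratification in $\cU_0$: Lojasiewicz's theorem a priori yields only a locally finite decomposition, but passing to a relatively compact subneighborhood of $0$ upgrades this to genuine finiteness, which is precisely what is needed to exclude the ``infinitely many isolated points accumulating at $0$'' scenario. Once that is in hand, the case distinction above is immediate.
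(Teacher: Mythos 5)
Your argument is correct and follows essentially the same route as the paper: both invoke the Bierstone--Milman/{\L}ojasiewicz stratification (the paper cites \cite[Prop.~2.10]{BM}) of a (semi-)analytic germ into finitely many connected real analytic submanifold strata, and then split on whether the maximum stratum dimension is zero or positive. The only cosmetic difference is that you stratify the semi-analytic set $X^-$ directly, while the paper stratifies the analytic set $X$ and then, for a positive-dimensional stratum $X_i\not\subset W$, passes to a connected component of $X_i\setminus(X_i\cap W)$.
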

\begin{proof}
By \cite[Prop.~2.10]{BM},\footnote{Also see Theorem 3.14 and the beginning of the proof of Theorem 5.1 in [op.~cit.].} after shrinking $\cU$, $X$ has a \emph{finite} stratification into connected semianalytic sets $X_i$ which are analytic submanifolds of $\cU$ of pure dimensions $\delta_i$.  So after further shrinking $\cU$, the only possible $0$-dimensional stratum is the origin.  If some $X_i$ is not contained in $W$, then $\delta_i>0$ and $X_i\cap W$ is a closed analytic subset, and we take $Y$ to be a connected component of $X_i\setminus X_i\cap W$.
\end{proof}

At this point it will suffice to work with rational coefficients.  Let $V$ be a fiber of the local system $\bV_{\QQ}$ on $\Delta^*$, and $\{\ac_1,\ldots,\ac_g;\bc_1,\ldots,\bc_g\}$ denote a symplectic basis.  We may assume that the monodromy logarithm about $z=0$ takes the form $N=\begin{psmatrix}0 & 0 \\ -\mathcal{N} & 0\end{psmatrix}$ (with $\mathcal{N}=\mathcal{N}_{ij}$ a $g\times g$ block).  Define $\widetilde{\ac_i}:=\ac_i+\sum_j \ell(z)\mathcal{N}_{ij}\bc_j$ to get (together with $\bc_1,\ldots,\bc_g$) a basis for the canonical extension $\cV_e$.  A basis of $\mathcal{F}^1_e\mathcal{V}_e$ is written $\omega_i=\ac_i+\sum_j\tau_{ij}\bc_j=\widetilde{\ac_i}+\sum_{j}\eta_{ij}\bc_j$, with $\eta$ holomorphic at $0$ and $\tau=\ft+\ay \ft^*$ symmetric and $\ft^*$ positive definite (on $\Delta^*$).

For a local (multivalued) lift of $R$ to $\cV$, we can write $\tilde{R}=\sum_{j=1}^g F_j\widetilde{\ac_j}+G_j\bc_j$.  To compute $\fR^*$, we need to rewrite $\tilde{R}$ with respect to the local system basis $\{\ac_j;\bc_j\}$, take imaginary parts of the coefficients, and finally rewrite again in the canonical extension basis, which yields single-valued coefficients.  However, the coefficient of $\widetilde{\ac_j}$ in $\fR^*$ is simply $\fF_j^*=\mathrm{Im}(F_j)$.
Deferring a detailed discussion of singularities to \S\ref{S4.1}, we keep things simple here:  the image of a generator of $E^{2,2}_{\lim}$ under $N$ is given by $\sum_{j=1}^g a_j\widetilde{\ac_j}(0)$ for some $a_j\in \QQ$; and $F_j=2\pi\ay a_j\log(z)+\{\text{holomorphic on }\Delta\}$, whence $\tfrac{1}{2\pi}\fF^*_j=a_j\log|z|+\{\text{harmonic on }\Delta\}$.  If any $a_j\neq 0$, after shrinking the disk the harmonic part is bounded by some $C$ which $|a_j\log|z||$ everywhere exceeds.  Hence $\fF_j^*$, \emph{a fortiori} $\fR^*$, has no zero in $\Delta^*$.  This reduces the problem to the case where $R$ is nonsingular at $0$.

In this case\footnote{N.B.: Here we do not break the argument into subcases $N=0$ and $N\neq 0$ as we did in \S\ref{S2}.} $R$ admits a single-valued lift
\begin{align*}
\tilde{R}\;&=\phantom{:}\textstyle\sum_i (F_i\ac_i+\hat{G}_i\bc_i)\;=\;\sum_iF_i\widetilde{\ac_i}+\sum_i(\hat{G}_i-\ell(z)\sum_j \mathcal{N}_{ij}F_j)\bc_i\\ 
&=:\textstyle\sum_i(F_i\widetilde{\ac_i}+G_i\bc_i)
\end{align*}
in which $F_i,G_i\in \mathcal{O}(\Delta)$ and $N\tilde{R}(0)=0$ (which means that for each $i$ we have $\sum_j \mathcal{N}_{ij}F_j(0)=0$). Differentiating gives
$$\textstyle\nabla_{\partial_z}\tilde{R}=\sum_{i=1}^g (f_i\ac_i+\hat{g}_i\bc_i),\;\;\;\;\;\hat{g}_i=\sum_j\tau_{ij}f_j=g_i+\ell(z)\sum_j\mathcal{N}_{ij}f_j,$$ for some $f_i$ and $g_i(=\sum_j\eta_{ij}f_j)$ which are holomorphic on $\Delta$.\footnote{Note that $F_i'=f_i$, $\hat{G}_i'=\hat{g}_i$, and $G_i'=g_i-\tfrac{1}{2\pi\ay z}\sum_j\mathcal{N}_{ij}F_j$, which is also holomorphic on $\Delta$.} Taking imaginary part of $\tilde{R}$ gives $$\fR^*=\textstyle\sum_i(\fF_i^*\ac_i+\hat{\fG}^*_i\bc_i)=\sum_i\fF_i^*\widetilde{\ac_i}+\sum_i \left(\hat{\fG}^*_i-\ell(z)\sum_j\mathcal{N}_{ij}\fF^*_j\right)\bc_i,$$
where $\hat{\fG}^*_i=\text{Im}(\hat{G}_i)=\text{Im}(G_i+\ell(z)\sum_j\mathcal{N}_{ij}F_j)=\fG^*_i-\tfrac{\log|z|}{2\pi}\sum\mathcal{N}_{ij}\fF_j+\tfrac{\arg(z)}{2\pi}\sum_j\mathcal{N}_{ij}\fF_j^*$.

\emph{Suppose there exists a sequence of points, limiting to $z=0$, on which all $\fF_i^*$ and $\hat{\fG}^*_i$ vanish} (or equivalently, that it is not possible to shrink $\Delta$ so that $Z_{\RR}(\hat{R}|_{\Delta^*})$ is finite). By Lemma \ref{L3.5} applied to $A_i=\fF^*_i$ ($p=g$, $n=2$, $W=\{0\}$), we find that their vanishing locus is either $\Delta$ or a real analytic variety of dimension 1.  In the first case, a holomorphic function with zero imaginary part is constant, which contradicts local nonconstancy of $\tilde{R}$.  In the second case, by \cite[Cor.~4.9]{BM} there is a sequence of (real) blowups at $0$ that makes the preimage normal-crossing.  This guarantees the existence of an injective analytic map $\gamma\colon I:=(-\epsilon,\epsilon)\to \Delta$ (sending $t\mapsto \gamma(t)=(x(t),y(t))$, and $0\mapsto (0,0)$) with image in the zero locus of the $\{\fF_i^*\}$.  Write $\gamma$ for both the image curve and the map, and $\gamma'(t)=\begin{psmatrix}x'(t)\\ y'(t) \end{psmatrix}$.

The derivative of the restriction of the imaginary part to this curve is now
$$\tfrac{d}{dt}\gamma^*\fR^*=\nabla_{\gamma'(t)}\fR^*(\gamma(t))=\begin{psmatrix}\fF^*_x& \fF^*_y\\ \hat{\fG}^*_x & \hat{\fG}^*_y\end{psmatrix}\gamma'(t)=\begin{psmatrix}\ff^* & \ff\\ \hat{\fg}^* & \hat{\fg}\end{psmatrix}\gamma'(t)$$
where the matrix entries are all evaluated at $\gamma(t)$, and the product is ``$2g \times 2$ matrix times $2\times 1$ vector''.  By construction, the first $g$ entries of the resulting $g\times 1$ vector (as analytic functions on $I$) are identically zero.  The last $g$ entries are derivatives of functions $\hat{\fG}^*_i(\gamma(t))$ (analytic on $I\setminus\{0\}$, continuous on $I$) with infinitely many zeroes approaching the origin.  So these derivatives must themselves have infinitely many zeroes, reflecting stationary points of $\hat{\fG}^*_i(\gamma(t))$.  So each entry of 
\begin{equation}\label{e-zeroes}
(\hat{\fg}^*\;\hat{\fg})\gamma'(t)=\ft(\ff^*\;\ff)\gamma'(t)+\ft^*(\ff\;-\ff^*)\gamma'(t)=\ft^*(\ff\;-\ff^*)\gamma'(t)
\end{equation}
has zeroes limiting to the origin.

At this point we cannot simply invert $\ft^*$, since the zeroes could occur in different entries at different sequences of values of $t$.  However,\footnote{Here $\fn^*=\text{Im}(\eta)$.} observe that $\ft^*(\gamma(t))=\fn^*(\gamma(t))-\tfrac{1}{2\pi}\log|z(t)|\mathcal{N}=\widetilde{\fn^*}(t)-c\tfrac{\log|t|}{2\pi}\mathcal{N}$, where $c=\tfrac{1}{2}\text{ord}_0(x(t)^2+y(t)^2)$.  Then \eqref{e-zeroes} is $$\widetilde{\fn^*}(\ff\;-\ff^*)\gamma'(t)-\log|t|\tfrac{c}{2\pi}\mathcal{N}(\ff\;-\ff^*)\gamma'(t)=:\mathbf{A}(t)-\log|t|\mathbf{B}(t)\,,$$ where $\mathbf{A}(t)$ and $\mathbf{B}(t)$ have analytic entries on $I$.  Applying Lemma \ref{L3.4} entry by entry, both are identically zero; hence so is \eqref{e-zeroes} and thus (\emph{now} inverting $\ft^*$) $(\ff\;-\ff^*)\gamma'(t)$.  Conclude that
$$(F_x\;F_y)\gamma'(t)=(\ff\;-\ff^*)\gamma'(t)+\ay(\ff^*\;\ff)\gamma'(t)\equiv 0,$$
making all $F_i(\gamma(t))$ constant.  But since the $F_i$ are holomorphic, they are then constant on the complex analytic closure of $\gamma$, which is $\Delta$.  This makes all $f_i$ zero, again contradicting the local nonconstancy of $\tilde{R}$.  So there was no sequence of points as described above, and we can shrink $\Delta$ to make $\fR^*$ nowhere vanishing on $\Delta^*$. This completes the proof of Claim \ref{C3.3}.

\section{The multiparameter setting}\label{S4}

In this section, we prove Theorem \ref{T2}. Let $\BV$ be a $\ZZ$-PVHS with Hodge numbers $(g,g)$ on a smooth complex quasi-projective variety $S$ of dimension $d\leq g$, and $R\in \ANF_S(\BV(2))$ a nontorsion normal function for which $\nabla R$ has maximal rank at every point of $S$.

\begin{lem}\label{L4.1}
Let $s\in S$ be a point and $\sD\subset S$ be a neighborhood of $s$.  Then after possibly shrinking $\sD$, $\fR^*|_{\sD}$ is either nowhere $0$ or zero only at $s$.  That is, zeroes of $\fR^*$ in $S$ are isolated.
\end{lem}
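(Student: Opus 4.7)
The plan is to show that whenever $\fR^*$ vanishes at a point $s\in\sD$, its real derivative there already has maximal rank $2d$, so that $\fR^*$ is a real-analytic immersion at $s$ and $s$ is thereby isolated in $(\fR^*)^{-1}(0)$. If instead $\fR^*(s)\neq 0$, continuity of $\fR^*$ gives the ``nowhere $0$'' alternative on a shrunken $\sD$, so the substance of the lemma is in the vanishing case; granted maximal rank, the constant-rank theorem then produces local injectivity of $\fR^*$ near $s$, which is exactly the conclusion sought.

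To compute the derivative, I will work in a simply-connected $\sD\ni s$ equipped with a flat real frame $\{\ac_i\}_{i=1}^{2g}$ of $\VV_\RR|_{\sD}$ and a holomorphic lift $\tilde{R}=\sum_i F_i\ac_i$ of $R|_{\sD}$, so $F_i\in \cO(\sD)$ and $\fR^*=\sum_i\mathrm{Im}(F_i)\,\ac_i$ is real-analytic. Differentiating coefficient-wise via the flat connection and using $\bar\partial F_i=0$, for any real $v\in T_s S_\RR$,
$$
\nabla_v \fR^* \;=\; \sum_i \mathrm{Im}\bigl(v(F_i)\bigr)\, \ac_i|_s \;=\; \mathrm{Im}\bigl(\nabla_{v^{1,0}}\tilde{R}|_s\bigr).
$$
By Griffiths transversality, $\nabla_{v^{1,0}}\tilde{R}|_s\in \cF^1\cV|_s$, so $d\fR^*|_s$ factors as
$$
T_s S_\RR \;\xrightarrow{\nabla R}\; \cF^1\cV|_s \;\xrightarrow{\mathrm{Im}}\; \ay \cV_\RR|_s.
$$

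The next step is to check the rank of each arrow. The first is the complex-linear map appearing in the hypothesis of Theorem \ref{T2}, of complex rank $d$ and hence of real rank $2d$. For the second, its kernel $\cF^1\cV|_s\cap \cV_\RR|_s$ equals $V^{1,0}\cap V^{0,1}=0$ because $\BV$ has weight $1$; since $\dim_\RR\cF^1\cV|_s = 2g = \dim_\RR \ay\cV_\RR|_s$, the map $\mathrm{Im}$ is an $\RR$-linear isomorphism. The composition therefore has real rank $2d=\dim_\RR S$, as required.

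I do not anticipate a serious obstacle here: the argument is an assembly of Griffiths transversality, the weight-$1$ Hodge decomposition, and the constant-rank theorem. The only real care needed is the bookkeeping between $\CC$-linear and $\RR$-linear ranks, and the natural identification $\cV/\cV_\RR\cong \ay\cV_\RR$ through which $\fR^*$ is defined.
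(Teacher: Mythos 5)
Your argument is correct, and it takes a genuinely cleaner route than the paper's proof. You observe that the real-linear derivative $d\fR^*|_s$ factors through $v\mapsto \nabla_{v^{1,0}}R\in\cF^1\cV|_s$ followed by the imaginary-part projection, which is an $\RR$-linear isomorphism $\cF^1\cV|_s\xrightarrow{\sim}\ay\cV_\RR|_s$ because $V^{1,0}\cap V_\RR=0$ in weight $1$; combined with the maximal-rank hypothesis this makes $\fR^*$ a real-analytic immersion at every point of $S$, so zeroes are isolated by the inverse function theorem (local injectivity). The paper instead invokes the Bierstone--Milman stratification result (Lemma~\ref{L3.5}) to reduce, by contradiction, to the case where $\{\fR^*=0\}$ contains a positive-dimensional real-analytic submanifold $Y$, and then performs essentially the same linear-algebra computation at a point of $Y$: a nonzero tangent vector $\begin{psmatrix}a\\b\end{psmatrix}$ to $Y$ would give a $\CC$-linear relation among $\{\mu_j(0),\overline{\mu_j(0)}\}$, contradicting the $(g,g)$ Hodge decomposition and rank hypothesis. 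So the key computation is the same in both proofs (injectivity of $d\fR^*$, coming from transversality, admissibility, and weight $1$); what you gain is avoiding the stratification lemma entirely here. The paper's framing via Lemma~\ref{L3.5} is presumably chosen for uniformity, since that lemma (and its o-minimal analogue, Lemma~\ref{L4.3}) is needed later for Claim~\ref{C4.2} where the zero locus genuinely can contain curves along which the monodromy-driven asymptotics must be analyzed.
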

\begin{proof}
Regard $\fR^*=\mathrm{Im}(R)$ as a real analytic function from $\sD\subset\CC^d\cong \RR^{2d}$ to $\RR^{2g}$.  By Lemma \ref{L3.5} (with $W=\{0\}$), we can shrink $\sD$ so that $\{\fR^*=0\}\cap (\sD\sm \{0\})$ is either empty or contains a real analytic manifold of positive dimension. Assume the latter and place a coordinate neighborhood $\Delta^d$ at a point in this manifold.  (Make this point the new origin.)

By the maximal rank hypothesis, the $\mu_j(z):=(\nabla_{{\partial}/{\partial z_j}}R)(z)$ ($j=1,\ldots,d$) span a subspace of rank $d$ in $(\mathcal{F}^1\mathcal{V})_z$ for every $z\in \Delta^d$.  Writing $z_j=x_j+\ay y_j$, we have $\nabla_{{\partial}/{\partial x_j}}\fR^*=\text{Im}(\nabla_{{\partial}/{\partial x_j}}R)= \text{Im}(\nabla_{{\partial}/{\partial z_j}}R)=\text{Im}(\mu_j)$ and $\nabla_{{\partial}/{\partial y_j}}\fR^*=\text{Re}(\mu_j)$.

Since $\fR^*$ is zero on some manifold through $0$, for some nonzero tangent vector\footnote{Here $a$ and $b$ are both $d\times 1$ column vectors with real entries.} $\begin{psmatrix}a \\ b\end{psmatrix}\in \RR^{2d}\cong T_0\Delta^d$ we have $(\nabla_{\begin{psmatrix}a \\ b\end{psmatrix}}\fR^*)|_0=0$.  Then 
\begin{align*}
0&=\textstyle \sum_{j=1}^d(a_j \text{Im}(\mu_j)+b_j \text{Re}(\mu_j))(0)\\
&=\textstyle\sum_{j=1}^d\tfrac{1}{2}(b_j-\ay a_j)\mu_j(0)+\sum_{j=1}^d\tfrac{1}{2}(b_j+\ay a_j)\overline{\mu_j(0)}
\end{align*}
yields a contradiction as $\{\mu_j(0),\overline{\mu_j(0)}\}_{j=1}^d$ are independent over $\CC$.
\end{proof}

Now apply the covering Lemma \ref{L3.2} to replace $S$ by $\TS\subseteq \BS$ with $\Sigma=\BS\sm\TS$ a NCD, about which the basechange $\hat{\BV}$ has unipotent monodromies.  Clearly, since the cover is \'etale, $\nabla\hat{R}$ is everywhere of maximal rank and zeroes of $\hat{\fR}^*$ in $\TS$ are still isolated by Lemma \ref{L4.1}.  

Let $x\in \BS$ be an arbitrary point, and $\Delta^d\cong U\subset \BS$ a coordinate neighborhood of $x$ with $U\cap \TS\cong (\Delta^*)^{c}\times \Delta^{d-c}$ and coordinates $z_1,\ldots,z_d$.  We will show (taking $U$ sufficiently small) that $\hat{\fR}^*|_{U\cap \hat{S}}$ is (i) empty if $U\not\subset \TS$ and (ii) either empty or $\{x\}$ if $U\subset \TS$.  (Indeed, (ii) is already clear by the lemma.)  The compactness of $\BS$ then yields a finite cover by polydisks, in which (excluding the intersection of each polydisk with $\Sigma$, where $\hat{\fR}^*$ is not defined\footnote{Of course, we can extend $\hat{R}$ to those components of $\Sigma$ along which it is nonsingular.  It is entirely possible that zeroes of $\fR^*$ in this extension may not be isolated.  We are making no claim about this.}) the only possible zeroes of $\hat{\fR}^*$ are the centers of those polydisks which are contained in $\TS$.  This gives the desired finiteness of $Z_{\RR}(\widehat{R})$ hence that of $Z_{\RR}(R)$.

Before proceeding we spell out what is meant by (i) and simplify notation (dropping the hats).\footnote{We can also pass from $\ZZ$- to $\QQ$-VMHS, since we have already used the fact that $\BV$ is a $\ZZ$-PVHS to invoke the monodromy theorem.}  Suppose we have, for $1\leq c\leq d\leq g$,
\begin{itemize}[leftmargin=0.5cm]
\item a $\QQ$-PVHS $\BV$ on $U:=(\Delta^*)^{c}\times \Delta^{d-c}$ with Hodge numbers $(g,g)$ and (possibly trivial) unipotent monodromy operators $T_1,\ldots,T_{c}$; and
\item a nontorsion normal function $R\in \ANF_{U}(\BV(2))$ with the properties that zeroes of $\fR^*$ are isolated in $U$ and $\nabla R$ has rank $d$ everywhere.
\end{itemize}
In the subsections that follow we will show the
\begin{claim}\label{C4.2}
After shrinking $\overline{U}=\Delta^d$, $\fR^*$ is nowhere zero on $U$.
\end{claim}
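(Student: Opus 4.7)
The plan is to localize near an arbitrary boundary point $x\in \Sigma\cap\overline{U}$ and rule out accumulation of zeroes of $\fR^*$ at $x$. By Lemma~\ref{L4.1} the maximal rank hypothesis forces such zeroes to be isolated in $U$, so Claim~\ref{C4.2} can fail (after every shrinking of $\overline{U}$) only if some sequence of zeroes limits to a point of $\Sigma$. I would split the analysis according to whether $R$ is singular at $x$; by the footnote in the introduction, this can only happen along boundary components where $\cV$ is also singular, leaving four local scenarios to treat.

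In the singular case, the singularity class of $R$ at $x$ is determined by the action of faces of the monodromy cone $\sigma=\sum_{i=1}^{c}\RR_{\geq 0}N_i$ on a generator of $E_{\lim}^{2,2}$. I would invoke the Cattani--Kaplan theorem \cite{CK} on the constancy of the relative weight filtration on the interior of a face of $\sigma$, which promotes the nonvanishing of the singularity class from a single direction to a generic cone of directions. A direct generalization of the singular argument in \S\ref{S3.2} then extracts from the multivalued lift $\tilde R$ a leading term of the form $\bigl(\sum_{i=1}^{c}a_i\log(z_i)\bigr)\widetilde{\ac}$ with not all $a_i$ zero. Its imaginary part contains the harmonic function $\sum_i a_i\log|z_i|$, which diverges and dominates the remaining terms in any sufficiently small neighborhood of $x$, forcing $\fR^*\neq 0$ there.

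In the nonsingular case, $\tilde R$ extends to a single-valued holomorphic section of the canonical extension $\cV_e$ at $x$, and writing it in the canonical frame $\{\widetilde{\ac_i},\bc_i\}$ expresses $\fR^*$ as a real analytic function of $z$, $\bar z$, $\log|z_i|$, and $\arg(z_i)$. Passing to the uniformization $z_i=\exp(2\pi\ay\zeta_i)$ on a fundamental domain for the monodromy action, both $\tilde R$ and $\fR^*$ become $\RR_{\mathrm{an,exp}}$-definable on a bounded definable neighborhood of the boundary, using the definability theorems for period maps \cite{BBT} together with the standard nilpotent-orbit description of canonical extensions. The zero set of $\fR^*$ is then $\RR_{\mathrm{an,exp}}$-definable; since it is $0$-dimensional by Lemma~\ref{L4.1}, o-minimal cell decomposition forces it to be finite in any bounded definable neighborhood, ruling out an accumulating sequence.

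The hard part will be confirming that the Cattani--Kaplan dichotomy applies uniformly across the deep boundary strata of $\Sigma$ (where several monodromies act simultaneously) and, in the nonsingular case, that the definability of the period map \cite{BBT} transfers cleanly to $R$, its multivalued lift $\tilde R$, and the imaginary-part construction. An additional subtlety arises at mixed points where $R$ is singular along some, but not all, coordinate divisors through $x$: there one must combine both approaches, using the CK asymptotic in the singular directions and o-minimality in the remaining ones.
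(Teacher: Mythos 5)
Your outline correctly identifies the two tools (Cattani--Kaplan and o-minimality) and the case division by singularity class, but there is a genuine gap in the singular case and a subtle imprecision in the nonsingular case.

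\textbf{Singular case.} You claim that the leading term contributes a harmonic function $\sum_{i=1}^{c}a_i\log|z_i|$ which ``diverges and dominates the remaining terms in any sufficiently small neighborhood of $x$.'' This is \emph{false} once $c\geq 2$: since the $a^i=(a^i_j)$ can point in opposing directions, the sum $\sum_i L_i a^i$ with $L_i=-\log|z_i|\to+\infty$ can remain bounded (indeed can vanish) along suitable sequences approaching the deep stratum. The paper's argument in \S\ref{S4.1} is precisely designed to handle this: assuming a sequence of zeroes of $\fR^*$ accumulates at $0$, boundedness of $\|\sum_i L_i a^i\|$ is extracted, and compactness of $\PP^{c-1}(\RR_{\geq 0})$ yields a limit ray $[\cL_i]$ with $\fs_1=-\sum_{i\geq 2}\tfrac{\cL_i}{\cL_1}\fs_i$. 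Combined with the Koszul relation $N_i\fs_j=N_j\fs_i$ and the face condition $\fs_i\neq0\implies N_i\fs_i\neq0$, this forces two elements of the open monodromy cone to induce \emph{different} weight filtrations, and \emph{that} is what contradicts Cattani--Kaplan. Your phrase ``promotes the nonvanishing of the singularity class from a single direction to a generic cone of directions'' gestures at CK but is not a usable lemma, and the naive domination argument does not close the gap. You should also note that the ``mixed points'' you worry about at the end don't require a separate treatment: the singularity class at $0$ is the tuple $(\fs_1,\ldots,\fs_c)$, and the paper's dichotomy (some $\fs_i\neq0$ versus all $\fs_i=0$) is exhaustive and handled uniformly in \S\ref{S4.1}.

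\textbf{Nonsingular case.} Your route (isolation from Lemma \ref{L4.1}, definability, then cell decomposition to conclude a bounded discrete definable set is finite) is attractive and genuinely different from the paper's (which applies the curve-selection lemma of \cite{vdDM} to extract a real-analytic arc along which $\fR^*$ vanishes and then rederives a maximal-rank contradiction). But your definability claim is not quite right as stated: the coefficient of $\bc_i$ in $\fR^*$ involves $\arg(z_k)$, which is \emph{not} $\RR_{\mathrm{an,exp}}$-definable on $\Delta^*$; passing to a ``fundamental domain'' only gives definability after restricting $\arg(z_k)$ to a bounded interval, and then you must control zeroes on all sheets simultaneously. The paper sidesteps this by observing that on $\{\fF^*_j=0\;\forall j\}$ the $\arg$-terms drop out, so the zero set of $\fR^*$ coincides with that of the log-real-analytic functions $\{\fF^*_i,\widetilde{\fG^*_i}\}$, which \emph{are} definable in $\RR_{\mathrm{an,exp}}$ (Lemma \ref{L4.3}). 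Your appeal to period-map definability \cite{BBT} is also off-target: $\fR^*$ is not a period map but the imaginary part of a section of a canonical extension, and the relevant definability is most cleanly obtained by the elementary observation above rather than via \cite{BBT}. With that repair, your finiteness-by-cell-decomposition conclusion is a legitimate shortcut that avoids the curve-selection step.
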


As in \S\ref{S3.2}, we let $V$ denote a fiber of $\VV_{\QQ}$ on $U$, and $\{\ac_j,\bc_j\}_{j=1}^g$ denote a symplectic basis.  The latter may be chosen so that $N_1,\ldots,N_{c}$ take the form $N_k=\begin{psmatrix} 0 & 0 \\ -\cN^k & 0 \end{psmatrix}$ with $\cN^k=(\cN^k_{ij})$ symmetric; and then the $\bc_i$ and $\widetilde{\ac_i}:=\ac_i+\sum_{j=1}^g\sum_{k=1}^{c}\ell(z_k)\cN^k_{ij}\bc_j$ yield a basis for the unipotent extension.

\subsection{Nontrivial singularities}\label{S4.1}

In this part, where $0\neq \text{sing}_0(R)\in \Homm(\QQ(0),H^1(\imath_0^* R\jmath_*\BV(2)))$,\footnote{Here $\jmath\colon U\hookrightarrow \Delta^d$ and $\imath_0\colon \{0\}\hookrightarrow \Delta^d$ are the inclusions, and $R\jmath_* \BV(2)\in D^b MHM(\Delta^d)$.  We will spell out the singularity class explicitly (up to twist) as a tuple $(\fs_1,\ldots,\fs_c)$ below.} we make no use of the maximal rank assumption or its consequence (that zeroes of $\fR^*$ are isolated).  So the bounding of zeroes of $\fR^*$ away from a singularity of $R$ is a more general phenomenon.

Let $V_{\lim}$ denote the LMHS of $\BV$ at $0$, with monodromy weight filtration $M_{\bullet}:=W(N)[-1]_{\bullet}$, where $N=N_1+\cdots+N_{c}$.  We choose our symplectic basis so that it has graded pieces $\gr^M_2 V_{\lim}=\langle \acto(0),\ldots,\actr(0)\rangle$, $\gr^M_0 V_{\lim}=\langle \bc_1,\ldots,\bc_r\rangle$, and $\gr^M_1 V_{\lim}=\langle \ac_{r+1},\ldots,\ac_g;\bc_{r+1},\ldots,\bc_g\rangle$.  The monodromy logarithms $N_1,\ldots,N_{c}$ kill $M_1 V_{\lim}$ but (unlike $N$) may not individually induce isomorphisms from $\gr^M_2$ to $\gr^M_0$.  A result from \cite{CK} (which will be crucial for us) is that \emph{any $N'\in \RR_{>0}\langle N_1,\ldots,N_{c}\rangle$ induces the same filtration $M_{\bullet}$}.  Write $M^i_{\bullet}$ for the filtrations induced by the $N_i$.

Now consider the extension $0\to \BV(2)\to\BE\to \QQ(0)\to 0$ of $\QQ$-AVMHS on $U$ produced by $R$.  There are the monodromy logarithms $\tilde{N}_1,\ldots,\tilde{N}_{c}$ on $E_{\lim}$ lifting the ones on $V_{\lim}$, with sum inducing a weight filtration $\tilde{M}_{\bullet}$ on $E_{\lim}$ (restricting to $M[4]_{\bullet}$ on $V_{\lim}$). Denoting by $\mathbf{1}\in E_{\lim,\QQ}$ a lift of $1\in \QQ(0)\cong \gr^{\tilde{M}}_0E_{\lim,\QQ}$, we set $\fs_i:=\tilde{N}_i\mathbf{1}\in \gr^{\tilde{M}}_{-2}E_{\lim,\QQ}\cong \gr^{M}_2V_{\lim,\QQ}$.  We can represent $\fs_i=:\sum_{j=1}^r a^i_j\widetilde{\ac_j}(0)$ by a vector $a^i=(a^i_j)\in \QQ^r$.

Each of these $\fs_i$ is the limit of a singularity class along the $i^{\text{th}}$ coordinate hyperplane. Hence if $\fs_i\neq 0$, then $\fs_i$ is nonzero in $\gr^{M^i}_2V_{\lim}$, which $N_i$ sends isomorphically to $\gr^{M^i}_0 V_{\lim}$, and
\begin{equation}\label{prN1}
\fs_i\neq 0\implies N_i\fs_i\neq 0.
\end{equation}
Moreover, as $(\fs_1,\ldots,\fs_{c})$ gives a class in $H^1$ of the standard Koszul-type complex $V_{\lim}\overset{\oplus N_i}{\to} \oplus_{i=1}^{c} V_{\lim}(-1) \to \oplus_{i<j} V_{\lim}(-2)$, we see that
\begin{equation}\label{prN2}
N_i\fs_j=N_j\fs_i\;(\forall i,j).
\end{equation}
Finally, since the singularity class of $R$ at the origin is nonzero by assumption, some $\fs_i\neq 0$, wolog $\fs_1$.

For a local (multivalued) lift of $R$ to $\mathcal{V}$, we can write $\tilde{R}=\sum_{j=1}^g F_j\widetilde{\ac_j}+G_j\bc_j$, so that $\fF^*_j=\text{im}(F_j)$ is the coefficient of $\widetilde{\ac_j}$ in $\fR^*$.  The singularity class implies that $F_j=2\pi\ay\sum_{i=1}^{c} a^i_j\log(z_i)+\{\text{holomorphic on }\Delta^d\}$, whence $\fR_j^*:=\tfrac{1}{2\pi}\fF^*_j=\sum_{i=1}^{c}a^i_j\log|z_i|+\{\text{harmonic on }\Delta^d\}$.  If $c=1$, then at least one period blows up like $\log|z_1|$, which beats the (bounded) harmonic part on a possibly smaller polydisk.

So assume $2\leq c\leq d\leq g$, and suppose there is a sequence of points $z^{\gamma}=(z^{\gamma}_i)\to 0$ in $U$ with $\fR^*(z^{\gamma})=0$ (\emph{a fortiori} all $\fR^*_j(z^{\gamma})=0$).  Since the harmonic parts are bounded below $\tfrac{1}{\sqrt{g}}$ on a polydisk, for each $K\in \mathbb{N}$ there must exist $L_i(K)\in \RR_{>K}$ ($i=1,\ldots,c$) such that $\|\sum_i L_i(K)a^i\|<1$.  (This is simply by taking $\gamma$ sufficiently large that $-\log|z^{\gamma}_i|>K$.) Since $\PP^{c-1}(\RR_{\geq 0})$ is compact, there is a subsequence $K_p$ such that $[L_i(K_p)]$ has a limit $[\cL_i]\in \PP^{c-1}(\RR_{\geq 0})$.  We may assume that $\cL_1\neq 0$, so that $\|a^1+\sum_{i>1}\tfrac{L_i(K_p)}{L_1(K_p)}a^i\|<\tfrac{1}{L_1(K_p)}<\tfrac{1}{K_p}\to 0$ $\implies$ $$\textstyle\|a^1+\sum_{i>1}\tfrac{\cL_i}{\cL_1}a^i\|\leq \|\sum_{i>1}\left(\tfrac{\cL_i}{\cL_1}-\tfrac{L_i(K_p)}{L_1(K_p)}\right)a^i\|+\|a^1+\sum_{i>1}\tfrac{L_i(K_p)}{L_1(K_p)}a^i\|\to 0$$
$\implies$ $\|a^1+\sum_{i>1}\tfrac{\cL_i}{\cL_1}a^i\|=0$.  So $a^1=-\sum_{i=2}^{c}\cL_i a^i=0$, or equivalently, 
\begin{equation}\label{prN3}
\textstyle\fs_1=-\sum_{i\geq 2}\tfrac{\cL_i}{\cL_1}\fs_i.
\end{equation}

Now by \eqref{prN3} and \eqref{prN2} we have 
\begin{align*}
\textstyle(\sum_{i=1}^{c}\cL_i N_i)\fs_1&=\textstyle\cL_1N_1\fs_1+\sum_{i\geq 2}\cL_i N_i\fs_1\\
&=-\textstyle\sum_{i\geq 2}\cL_iN_1\fs_i+\sum_{i\geq 2}\cL_i N_i\fs_1=0.	
\end{align*}
But combining this with \eqref{prN1} gives $(N_1+\sum_{i=1}^{c}\cL_iN_i)\fs_1=N_1\fs_1\neq 0$.  Hence $N'=\sum_{i=1}^{c}\cL_iN_i$ and $N''=N_1+\sum_{i=1}^{c}\cL_iN_i$, which are both in $\RR_{>0}\langle N_1,\ldots,N_{c}\rangle$, induce different weight filtrations:  for the first, $\fs_1\in M'_1$ (as $N'$ kills it); for the second, $\fs_1\notin M''_1$ (as $N''$ doesn't).  This contradicts \cite[Thm.~3.3]{CK}, proving Claim \ref{C4.2} in this case.

\subsection{Trivial singularities}\label{S4.2}

To make the idea clear, we explain first the case where the $T_k$ are trivial ($N_k=0$), so that \emph{both} $\BV$ and $R$ are nonsingular at $0$.  Write $\tilde{R}=\sum_{i=1}^g\{F_i\ac_i+G_i\bc_i\}$, with $F_i=\fF_i+\ay\fF_i^*$ and $G_i=\fG_i+\ay\fG_i^*$ for the decomposition into real and imaginary parts.  Since $R$ is nonsingular these extend to holomorphic functions on $\Delta^d$, with harmonic (hence real-analytic) real and imaginary parts.  We apply Lemma \ref{L3.5} with $n=2d$, $W=\Delta^d\sm U$, and $X$ the vanishing locus of $\fR^*$, i.e.~of the harmonic functions $\{\fF_1^*,\ldots,\fF_g^*;\fG_1^*,\ldots,\fG_g^*\}$ of $\{x_1,\ldots,x_d;y_1,\ldots,y_d\}$ (real and imaginary parts of $z_1,\ldots,z_d$).

Suppose that we are in alternative (ii) of the Lemma applied to this setting.  In particular, these $2g$ functions are constant along a positive-dimensional real-analytic manifold $Y\subset U$.  Write 
\begin{itemize}[leftmargin=0.5cm]
\item $\fF^*_x:=(\tfrac{\partial\fF^*_i}{\partial x_j})$ ($g\times d$ matrix) and so forth (for $x$ and $y$; $F,\fF^*,\fF,G,\fG^*,$ and $\fG$), so that the Cauchy-Riemann equations give e.g.~$\fF_x^*=-\fF_y$ and $\fF^*_y=\fF_x$;
\item $\omega_i=\ac_i+\sum_j \tau_{ij}\bc_j$ ($i=1,\ldots,g$) for the standard basis of $\mathcal{F}^1\mathcal{V}$, with $\tau=\ft+\ay\ft^*$ the real/imaginary decomposition, and $\tau$ symmetric with $\ft^*$ positive definite on $\Delta^d$; and 
\item $\nabla_{\partial_{z_k}}\tilde{R}=\sum_i f_{ik}\omega_i$ ($k=1,\ldots,d$; $f=\ff+\ay\ff^*$) so that in matrix notation $DF:=(\tfrac{\partial F_i}{\partial z_k})=f$ and $DG=\tau f$.  (Also note that $DF=F_x=-\ay F_y$, so $\ff=\fF_x$ etc.)
\end{itemize}
Then for any tangent vector $\begin{psmatrix}a\\ b\end{psmatrix}\in \RR^{2d}$ at a point $z\in Y$, we have $$\begin{psmatrix}\ff^* & \ff \\ \text{Im}(\tau f) & \text{Re}(\tau f) \end{psmatrix}\begin{psmatrix}a\\ b\end{psmatrix}= \begin{psmatrix}\fF_x^* & \fF_x\\ \fG_x^* & \fG_x\end{psmatrix}\begin{psmatrix}a\\ b\end{psmatrix}=\begin{psmatrix}\fF_x^* & \fF_y^*\\ \fG_x^* & \fG_y^*\end{psmatrix}\begin{psmatrix}a\\ b\end{psmatrix}=0,$$
hence $(\ff^*\;\ff)\begin{psmatrix}a\\ b\end{psmatrix}=0$ and $0=\ft(\ff^*\;\ff)\begin{psmatrix}a\\ b\end{psmatrix}+\ft^*(\ff\;-\ff^*)\begin{psmatrix}a\\ b\end{psmatrix}=\ft^*(\ff\;-\ff^*)\begin{psmatrix}a\\ b\end{psmatrix}$.  As $\ft^*$ is invertible on $\Delta^d$, this gives $(\ff\;-\ff^*)\begin{psmatrix}a\\ b\end{psmatrix}=0$.  Since we have $(\ff\;-\ff^*)+\ay(\ff^*\;\ff)=(F_x\;F_y)$, we get that $D_{\begin{psmatrix}a\\ b\end{psmatrix}}F=0$, and conclude that $F$ is constant along $Y$.  But then since $F$ is complex analytic, it is constant along the complex analytic closure $\cY$ of $Y$ in $U$.  Since $DG=\tau DF$, $\tilde{R}$ is constant on $\cY$ and so $\mathrm{rk}(\nabla \tilde{R})<d$ along $\cY$, a contradiction.  So we are in alternative (i) of the Lemma \ref{L3.5}, which yields Claim \ref{C4.2} in this subcase.

However, we would still be a long way from being able to deal with nontrivial $\{T_k\}$ without one additional tool.  This is the place in the proof where an input from o-minimal theory becomes critical.  By a \emph{log-real-analytic} function on $\Delta^d$, we shall mean a polynomial in the $\log|z_i|=\tfrac{1}{2}\log(x_i^2+y_i^2)$ with coefficients in real analytic functions on $\Delta^d$.

\begin{lem}\label{L4.3}
Let $A_1,\ldots,A_p$ be log-real-analytic functions on $\Delta^d$, with common vanishing locus $X$.  Then either (i) it is possible to shrink $\Delta^d$ so that $X^-:=X\cap U$ becomes empty, or (ii) $X^-$ contains a connected real-analytic curve.
\end{lem}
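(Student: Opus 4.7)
The plan is to invoke the o-minimal structure $\RR_{\text{an,exp}}$ of \cite{vdDM}, in which all log-real-analytic functions on $U$ are definable, combined with the o-minimal curve selection lemma and the analytic cell decomposition.

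First, I verify that $X^-$ is definable. On $U=(\Delta^*)^c\times \Delta^{d-c}$, each $\log|z_i|=\tfrac12\log(x_i^2+y_i^2)$ (for $i\leq c$) is the composition of the definable function $\log$ with a strictly positive real-analytic function. The coefficients of each $A_i$, when written as a polynomial in $\log|z_1|,\ldots,\log|z_c|$, are real-analytic on $\Delta^d$; after shrinking $\Delta^d$ slightly they are restrictions of real-analytic functions on a neighborhood of $\overline{\Delta^d}$, hence globally subanalytic and definable in $\RR_{\text{an}}\subset \RR_{\text{an,exp}}$. Polynomial combinations of definable functions being definable, each $A_i|_U$, and therefore $X^-=\bigcap_i\{A_i=0\}\cap U$, is a definable set.

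Second, suppose alternative (i) fails: $X^-\cap\{|z|<r\}\neq\emptyset$ for every $r>0$, so $0\in\overline{X^-}$. The o-minimal curve selection lemma applied to the definable set $X^-$ and the point $0$ produces a continuous definable arc $\gamma\colon [0,\ve)\to\overline{U}$ with $\gamma(0)=0$ and $\gamma((0,\ve))\subset X^-$.

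Third, I invoke the analytic cell decomposition for $\RR_{\text{an,exp}}$: every definable function of one variable is piecewise real-analytic, so $(0,\ve)$ decomposes into finitely many open subintervals on each of which every coordinate of $\gamma$ is real-analytic. On at least one such subinterval $\gamma$ is nonconstant (since $\gamma(0)=0$ while $\gamma((0,\ve))\subset X^-$), so its real-analytic derivative $\gamma'$ vanishes only on a discrete set; restricting further to an open subinterval $I$ on which $\gamma'$ is nowhere zero and (by the inverse function theorem, after one more shrinking) $\gamma|_I$ is injective, the image $\gamma(I)\subset X^-$ is the desired connected real-analytic curve.

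The main obstacle is the passage from ``definable'' to ``real-analytic'' in step three: general o-minimality yields only piecewise $C^\infty$ arcs, and the real-analyticity we need is a genuine input from the specific structure $\RR_{\text{an,exp}}$. This is precisely why the proof proceeds through o-minimal theory rather than via a direct resolution-of-singularities: any real blow-up of $\Delta^d$ along the coordinate hyperplanes $\{z_i=0\}$ simply reproduces the logarithmic singularities in the new exceptional-divisor coordinates, leaving Lemma~\ref{L3.5} inapplicable.
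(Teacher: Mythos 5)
Your proof is correct and takes essentially the same route as the paper's. The paper invokes the curve selection lemma for the analytic-geometric category $\mathscr{C}_{\mathrm{an,exp}}$ associated to $\RR_{\mathrm{an,exp}}$ \cite[\S1.17]{vdDM}, which delivers in one step a continuous injective $\gamma\colon [0,1)\to\Delta^d$ with $\gamma(0)=0$, $\gamma(0,1)\subseteq X^-$, and $\gamma|_{(0,1)}$ real analytic; you reconstruct precisely that statement from its ingredients (definability of the $A_i$ in $\RR_{\mathrm{an,exp}}$, generic o-minimal curve selection, and analytic cell decomposition, the last being the $\RR_{\mathrm{an,exp}}$-specific input that upgrades piecewise-$C^\infty$ to piecewise-real-analytic, as you correctly flag).
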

\begin{proof}
Let $\mathscr{C}$ be the analytic-geometric category corresponding to the o-minimal structure $\RR_{\text{an,exp}}$ \cite[\S2.5(6)]{vdDM}.  Then $X^-$ belongs to $\mathscr{C}(\Delta^d)$ (not just $\mathscr{C}(U)$).  If (i) doesn't hold, there is a sequence of points in $X^-$ limiting to $0\in \Delta^d$, and so $0$ belongs to the \emph{frontier} of $X^-$.  By [op.~cit., \S1.17], there exists a continuous injective map $\gamma\colon [0,1)\to \Delta^d$ such that $\gamma(0,1)\subseteq X^-$, $\gamma|_{(0,1)}$ is real analytic, and $\gamma(0)=0$.
\end{proof}

To eliminate case (ii) and verify Claim \ref{C4.2}, we now modify the above proof as follows.  First, we may write
\begin{align*}
\tilde{R}&=\textstyle\sum_{i=1}^g (F_i\ac_i+\hat{G}_i\bc_i)\\
&=\textstyle\sum_{i=1}^g F_i\widetilde{\ac_i}+\sum_{i=1}^g (\hat{G}_i-\sum_{j=1}^g\sum_{k=1}^{c}\ell(z_k)\cN_{ij}^kF_j)\bc_i\\
&=:\textstyle \sum_{i=1}^g(F_i\widetilde{\ac_i}+G_i\bc_i)
\end{align*}
with $F_i,G_i\in\cO(\Delta^d)$.  Taking imaginary parts gives
\begin{align*}
\fR^*&=\textstyle\sum_{i=1}^g (\fF^*_i\ac_i+\hat{\fG}_i^*\bc_i)=\textstyle\sum_i\fF^*_i\widetilde{\ac_i}+\sum_i(\hat{\fG}^*_i-\sum_{j,k}\ell(z_k)\cN_{ij}^k\fF^*_j)\bc_i\\
&=\textstyle\sum_i\fF_i^*\widetilde{\ac_i}+\sum_i(\fG^*_i-\tfrac{1}{2\pi}\sum_{j,k}\log|z_k|\cN^k_{ij}(\fF_j-\ay\fF^*_j))\bc_i.
\end{align*}
If there is a sequence of points in $U$ on which $\fR^*$ vanishes, then they belong to the common zero-locus of the (resp.~real-analytic and log-real-analytic) functions $\fF^*_i$ and $\widetilde{\fG_i^*}:=\fG^*_i-\tfrac{1}{2\pi}\sum_{j,k}\log|z_k|\cN^k_{ij}\fF_j$ ($i=1,\ldots,g$).  By Lemma \ref{L4.3}, these functions must vanish along a real-analytic curve $\gamma$ in $U$.  Clearly $\fR^*$ does as well.

Writing $DF=f$ and $D\hat{G}=\hat{g}=\tau f$, we then have
$$0=\tfrac{d}{dt}\gamma^*\fR^*=\begin{psmatrix}\fF_x^* & \fF_y^* \\ \hat{\fG}^*_x & \hat{\fG}^*_y \end{psmatrix}\gamma'(t) =\begin{psmatrix}\ff^* & \ff \\ \hat{\fg}^* & \hat{\fg} \end{psmatrix}\gamma'(t) =\begin{psmatrix}\ff^* & \ff \\ \text{Im}(\tau f) & \text{Re}(\tau f) \end{psmatrix}\gamma'(t)\,.
$$
At this point the proof is as before, using invertibility of $\ft^*$ along $\gamma$ to show that $F$ is constant on the complex analytic closure of $\gamma$ in $U$, contradicting the maximal rank assumption there.  So there cannot have been such a sequence of points, and Claim \ref{C4.2} holds in this case too.

\section{Application to Laurent polynomials}\label{S5}

It remains to provide a geometric scenario which produces normal functions as in Theorem \ref{T2}.  As in the Introduction, let $\Delta\subset \RR^2$ be a polygon with integer vertices and $g$ interior integer points; and let $F=F_a$ be the family of tempered Laurent polynomials in \eqref{eq1.2}, depending on $a\in S:=\bA^g\sm \Sigma$.  This produces a family of smooth complete curves $\PP_{\Delta}\times S\supset \cC\overset{\pi}{\to}S$ and a higher Chow cycle $\xi\in \mathrm{CH}^2(\cC,2)$ whose restriction to $\cC^*:=\cC\cap (\GG_m^2\times S)$ is the coordinate symbol $\{-x_1,-x_2\}$ (or a nonzero integer multiple thereof).

The images of $\xi_a\in \mathrm{CH}^2(\cC_a,2)$ under \eqref{eq1.3} give rise to an admissible normal function $R\in \mathrm{ANF}_S(\BV(2))$, where $\BV=R^1\pi_*\ZZ$.  We will only need the following, see \cite[\S\S3-4]{DK} and \cite[App.~A]{DKS} for more details.  The restriction of $R_a\in H^1(C_a,\CC/\ZZ(2))$ to $R_a^*\in H^1(C_a^*,\CC/\ZZ(2))$ is represented by the restriction to $X_a^*$ of a global current $R\{-x_1,-x_2\}$.  The exterior derivative of this current is $\tfrac{dx_1}{x_1}\wedge\tfrac{dx_2}{x_2}$, up to a $\ZZ(2)$-valued current which we can bound locally over the base.  So we have $\nabla R^*=\tfrac{dx_1}{x_1}\wedge \tfrac{dx_2}{x_2}$ on $\cC^*$.

Now the residue of $\tfrac{dx_1}{x_1}\wedge\tfrac{dx_2}{x_2}$ on a component of $\cC \setminus\cC^*$ is $\mathrm{dlog}$ of the corresponding root of an edge polynomial of $F$; as this is a root of unity hence constant, the residue is zero.  Thus $\tfrac{dx_1}{x_1}\wedge\tfrac{dx_2}{x_2}$ extends to a holomorphic form $\Omega\in \Omega^2(\cC)$.  Moreover, $\nabla R$ is represented by a holomorphic 2-form on $\cC$ by Griffiths transversality.  Since $\Omega^2(\cC)\hookrightarrow \Omega^2(\cC^*)$, we must therefore have $\nabla R=\Omega$.  Since $\cC^*$ is cut out of $\GG_m^2\times S$ by $F$, we can also write $\Omega$ as $\mathrm{Res}_{\cC}(\tfrac{dF}{F}\wedge \tfrac{dx_1}{x_1}\wedge \tfrac{dx_2}{x_2})$ on $\cC^*$ hence on $\cC$.  For the ``directional'' derivatives we therefore have
\begin{align*}
(\nabla_{\partial_{a_j}}R)(a)&=\langle \nabla R_a,\partial_{a_j}\rangle|_{C_a} = \langle \mathrm{Res}_{\mathcal{C}}(\tfrac{dF}{F}\wedge\tfrac{dx_1}{x_1}\wedge \tfrac{dx_2}{x_2}),\widetilde{\partial_{a_j}}\rangle|_{C_a}\\
&=\mathrm{Res}_{C_a}(\langle \tfrac{dF}{F}\wedge\tfrac{dx_1}{x_1}\wedge\tfrac{dx_2}{x_2},\partial_{a_j}\rangle)\\
&=\mathrm{Res}_{C_a}(\tfrac{\partial_{a_j}F}{F}\wedge \tfrac{dx_1}{x_1}\wedge\tfrac{dx_2}{x_2})\\
&=\mathrm{Res}_{C_a}\left(\frac{x_1^{m_1^j}x_2^{m^j_2}\frac{dx_1}{x_1}\wedge \frac{dx_2}{x_2}}{F_a(x_1,x_2)}\right)=:\mu_j(a).
\end{align*}
By \cite[p.~386]{Ba}, for \emph{every} $a\in S$ these $\mu_1,\ldots,\mu_g$ yield a basis of $\Omega^1(C_a)$.  In particular, they are independent, and $\nabla R$ has maximal rank.  This proves Proposition \ref{P3}.

\appendix
\section{Some matrix differential equations}\label{AA}

In this appendix we present an alternate proof of Claim \ref{C4.2} (without using \cite{BM} or \cite{vdDM}) in the case where both $R$ and $\BV$ have trivial singularities.  The key here is Lemma \ref{LA2}, which solves an interesting problem in PDE for which we could not find a reference.  We thank J.~McCarthy for discussions on this point.

\begin{thm}\label{TA1}
Let $B \subset \complex^d$ be an open ball centered at the origin, $F, G \colon B \to \complex^g$ vectors of holomorphic functions vanishing at the origin, and $\tau \colon B \to M_{g \times g}(\complex)$ a matrix of holomorphic functions satisfying $dG = \tau dF$\footnote{We think of $dF$ and $dG$ as vectors of one-forms.} and $\det(\im{t}(0)) \neq 0$. If $V(\im{F}, \im{G})$ contains a sequence of points $\{x_i \neq 0\}$ converging to the origin, then a subsequence lies on a positive-dimensional component of $V(F, G)$ passing through the origin.
\end{thm}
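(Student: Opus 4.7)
The plan is to reduce Theorem \ref{TA1} to the existence of a non-constant holomorphic map $\sigma \colon \Delta \to B$ with $\sigma(0) = 0$ whose image lies in $V(F,G)$ and contains a subsequence of $\{x_i\}$. The image is then a one-dimensional complex-analytic subset of $V(F,G)$ through the origin, lying in a positive-dimensional irreducible component of $V(F,G)$, and the theorem follows.

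The structural mechanism that forces any such arc to lie in $V(F,G)$ is as follows. Write $\tau = \ft + \ay\, \ft^*$; then the relation $dG = \tau\, dF$ decomposes into real and imaginary parts as $d\im{G} = \ft^*\, d\re{F} + \ft\, d\im{F}$. If $\sigma(t)$ is a real-analytic arc in $V(\im{F},\im{G})$, differentiation yields $d\im{F}(\sigma'(t)) = 0$ and, via this identity, $\ft^*(\sigma(t))\, d\re{F}(\sigma'(t)) = 0$. Shrinking $B$ so $\ft^*$ remains invertible forces $d\re{F}(\sigma'(t)) = 0$, hence $dF(\sigma'(t)) = 0$, so $F$ is constant along $\sigma$; since $F(0) = 0$ we obtain $F \equiv 0$ on $\sigma$, and likewise $G \equiv 0$ on $\sigma$.

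The substantive part --- the content of Lemma \ref{LA2} --- is the \emph{construction} of such an arc, in fact of a complex-analytic curve capturing a subsequence of $\{x_i\}$, from the bare accumulation data, while avoiding the real-analytic stratification of \cite{BM} and the o-minimal curve selection used in \S\ref{S4}. I would approach Lemma \ref{LA2} by viewing the joint vanishing $F \circ \sigma \equiv 0$, $G \circ \sigma \equiv 0$ as an overdetermined holomorphic system, using $dG = \tau\, dF$ to collapse the two vector conditions into a single matrix ODE for $\sigma$ whose coefficients involve $\tau$ and $dF$, and invoking invertibility of $\ft^*(0)$ to drive local solvability. The limiting tangent direction $\sigma'(0)$ should be extracted from $\{x_i\}$ by a Puiseux-type normalization of a suitable branch after blowing up the origin.

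The main obstacle I foresee is precisely this PDE step: classical Cauchy-Kowalevskaya-type theory does not directly accommodate overdetermined systems, and one must argue that the matrix ODE admits a genuine nontrivial holomorphic solution whose image contains infinitely many of the $x_i$, not merely a sequence accumulating to that image. I expect this will rely on a delicate interplay between invertibility of $\ft^*$, the local Weierstrass structure of $(F,G)$ near the origin, and the tangential information encoded in $\{x_i\}$ after a suitable blow-up.
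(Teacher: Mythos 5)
Your ``structural mechanism'' --- decomposing $dG=\tau\,dF$ into real and imaginary parts and using invertibility of $\ft^*$ to force $dF(\sigma'(t))=0$ along an arc in $V(\im{F},\im{G})$ --- is the right engine (it is essentially what drives \S\ref{S3.2} and \S\ref{S4.2}), but you have misidentified what Lemma~\ref{LA2} does, and the arc you propose to construct is neither constructed in the paper nor needed. The paper never produces an arc. Instead it resolves the divisor $V(F_1\cdots F_g)$, so that near a limit $p\in\pi^{-1}(0)$ of a lifted subsequence each $\tilde F_j$ becomes a unit times a monomial, and then Lemma~\ref{LA2} constructs a \emph{holomorphic matrix} $S$ satisfying the exact, integrated identity $G=SF$ with $S(0)=\tau(0)$, via $S=\tau-\varepsilon$ and $\varepsilon_{ij}(z)=\tfrac{1}{F_j(z)}\int_{\gamma_z}F_j\,d\tau_{ij}$; the monomial form of $F_j$ is precisely what makes this quotient holomorphic. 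Once $G=SF$ holds with $\im{S}$ everywhere invertible, the invertibility trick runs \emph{pointwise}: at each lifted sample point $0=\im{G}=\im{S}\,\re{F}+\re{S}\,\im{F}=\im{S}\,\re{F}$, so $\re{F}=0$, hence $F=G=0$ there. Pushing down yields infinitely many of the $x_i$ in the complex-analytic set $V(F,G)$, and the positive-dimensional component through $0$ comes for free. No interpolating curve, no Puiseux branch, no ODE.

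The gap in your proposal is exactly the step you flag: you have only a discrete set of common zeroes of the \emph{real-analytic} functions $\im{F},\im{G}$, and there is no a priori reason a single real-analytic or holomorphic arc through the origin should pass through infinitely many of them. Extracting such an arc is, in general, a curve-selection statement, which would require either the real-analytic stratification of \cite{BM} or the o-minimal cell decomposition of \cite{vdDM} --- precisely the inputs the appendix is designed to avoid. Your overdetermined-ODE idea does not repair this: Cauchy--Kowalevskaya would at best give you \emph{some} integral curve of a compatible system, with no control forcing it to contain a subsequence of $\{x_i\}$. The missing insight is to upgrade the differential relation $dG=\tau\,dF$ to the integrated relation $G=SF$ (after resolution), so the $\ft^*$-invertibility argument can be applied at each point individually rather than along a curve.
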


\begin{proof}
    We apply complex analytic embedded resolution of singularities to obtain a proper surjective holomorphic map $\pi \colon (\tilde{B}, \tilde{D}) \to (B, D)$, where $D$ is the zero divisor of the product $F_1 F_2 \cdots F_g$. The properties of $\pi$ yield a sequence $\{\tilde{x}_i\}$ in the preimage of $\{x_i\}$ converging to a point $p \in \pi^{-1}(0)$. There is a coordinate ball $U$ centered at $p$ in which the components of $\tilde{F} \coloneqq F \circ \pi$ take the form $\tilde{F}_j = u_j z^{a_j}$, with $u_j$ a unit in the analytic local ring $\mathcal{O}_{U, 0} \cong \complex\{z_1, z_2, \dots, z_g\}$ and $a_j$ a multi-index. We will henceforth work in $U$ and drop the tildes. We construct a matrix $S$ as in the lemma below. Shrinking $U$ so that $\im{S}$ is everywhere invertible, we observe
    \[
    0 = \im{G}(x_i) = \im{S}(x_i)\re{F}(x_i) + \re{S}(x_i)\im{F}(x_i) = \im{S}(x_i)\re{F}(x_i),
    \]
    which implies $\re{F}(x_i) = 0$ by invertibility of $\mathfrak{S}^*(x_i)$ and hence $G(x_i) = F(x_i) = 0$. The image of these points is a sequence downstairs converging to the origin and contained in $V(F, G)$, which is a complex analytic variety.
\end{proof}

\begin{lem}\label{LA2}
Let $F, G, \tau$ be defined on $U$ as above, with $V(F_1\cdots F_g)$ a union of coordinate hyperplanes. Then there is a matrix of holomorphic functions $S \colon U \to M_{g \times g}(\complex)$ satisfying $G = SF$ and $S(0) = \tau(0)$.
\end{lem}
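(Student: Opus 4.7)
The plan is to obtain $S$ by integrating the identity $dG = \tau\, dF$ along straight rays from the origin and then applying integration by parts. Since $F(0) = G(0) = 0$, parametrizing by $t \mapsto tz$ with $t \in [0,1]$ and invoking the fundamental theorem of calculus gives
\[ G_i(z) \;=\; \int_0^1 \sum_j \tau_{ij}(tz)\,\tfrac{d}{dt}F_j(tz)\,dt, \]
and integration by parts in $t$, together with $F_j(0) = 0$, rewrites this as
\[ G_i(z) \;=\; \sum_j \tau_{ij}(z)\,F_j(z) \;-\; \sum_j \int_0^1 \dot\tau_{ij}(tz)\,F_j(tz)\,dt, \]
where $\dot\tau_{ij}(tz) := \tfrac{d}{dt}\tau_{ij}(tz) = \sum_k z_k \partial_k \tau_{ij}(tz)$.

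The crux is the normal-crossings normal form $F_j = u_j z^{\alpha_j}$, which supplies the scaling identity $F_j(tz) = t^{|\alpha_j|}\,(u_j(tz)/u_j(z))\,F_j(z)$. This is well-defined after shrinking $U$ to a ball on which each $u_j$ is nowhere zero, so that $u_j(tz)/u_j(z)$ is holomorphic in $z$ for every $t \in [0,1]$. Substituting, the correction term becomes $M(z)F(z)$ for the explicit matrix
\[ M_{ij}(z) \;:=\; \int_0^1 \dot\tau_{ij}(tz)\,t^{|\alpha_j|}\,\frac{u_j(tz)}{u_j(z)}\,dt, \]
and I would take $S := \tau - M$. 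Holomorphy of $S$ on $U$ follows by differentiation under the integral sign, since each integrand is continuous in $(t,z)$ and holomorphic in $z$ for fixed $t$. The normalization $S(0) = \tau(0)$ follows because $\dot\tau_{ij}(tz) = \sum_k z_k\,\partial_k \tau_{ij}(tz)$ vanishes identically at $z = 0$, so $M(0) = 0$.

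The main obstacle is conceptual rather than computational: one must recognize that the normal-crossings hypothesis is doing more than describing $V(F_1 \cdots F_g)$ set-theoretically — it supplies the precise scaling $F_j(tz) = t^{|\alpha_j|}(u_j(tz)/u_j(z))F_j(z)$ that lets us pull $F_j(z)$ outside the line integral with a holomorphic coefficient. For a generic holomorphic function vanishing at the origin there is no such identity, and the integration-by-parts step would only land $G$ in an abstract ideal rather than yielding an explicit $g \times g$ matrix $S$ with the right value at the origin. Once this factorization is in hand, everything else is routine.
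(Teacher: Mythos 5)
Your proposal is correct and takes essentially the same approach as the paper: after unwinding the line integral $\varepsilon_{ij}(z)=\tfrac{1}{F_j(z)}\int_{\gamma_z}F_j\,d\tau_{ij}$ used there via the scaling identity $F_j(tz)=t^{|\alpha_j|}\bigl(u_j(tz)/u_j(z)\bigr)F_j(z)$, one sees that your $M_{ij}$ is literally the paper's $\varepsilon_{ij}$, so $S=\tau-M$ is the same matrix. If anything, your explicit factorization makes the holomorphy step cleaner than the paper's phrasing ``the integral vanishes whenever $F_j$ does'' (which as stated conflates vanishing with divisibility when $\alpha_j$ has entries $>1$), though the paper's parenthetical power-series computation supplies the same justification.
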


\begin{proof}
    Define $S \coloneqq \tau - \varepsilon$, where
\begin{equation}\label{eA}
    \varepsilon_{ij}(z) \coloneqq \frac{1}{F_j(z)} \int_{\gamma_z} F_j d\tau_{ij},
\end{equation}
    and $\gamma_z \colon [0, 1] \to U$ is the line segment $t \mapsto tz$.  (If $F_j$ vanishes identically, then we set $\varepsilon_{ij}\coloneqq 0$ for all $i$.)  A routine computation verifies that $G = SF$ where $S$ is defined.  We claim that after possibly shrinking the radius, $S$ is defined on all of $B$.
 
The restriction of $\varepsilon_{ij}$ to the $z_1$-axis is a one-variable holomorphic function vanishing at the origin.  Therefore, $\varepsilon_{ij}$ is either zero or indeterminate at the origin.  Now by hypothesis, each $F_j$ is associate to a monomial, with zero divisor is a union of coordinate hyperplanes. This forces the integral \eqref{eA} to vanish whenever $F_j$ does, meaning that every $\varepsilon_{ij}$ is holomorphic after possibly shrinking $B$.
%the origin is not contained in the indeterminacy locus of $\varepsilon_{ij}$. 
Consequently, $\varepsilon(0) = 0$.  (We may also proceed by a power series argument. Working in the analytic local ring and ignoring units, let $F_j = z^a$ and $\frac{\partial \tau_{ij}}{\partial z_k} = \sum_{b \geq 0} c_b z^b$. Then
    \[
    \frac{1}{F_j(z)}\int_{\gamma_z} F_j \frac{\partial \tau_{ij}}{\partial z_k} dz_k = z_k \sum_{b \geq 0} \frac{c_b}{|a| + |b| + 1} z^b,
    \]
    showing that $\varepsilon(0) = 0$.)  This completes the proof.
\end{proof}

\begin{rem}
In the proof of Lemma \ref{LA2}, we at least need the divisors of the $\{F_j\}$ to be star-shaped with respect to the origin.  Otherwise indeterminacy cannot be avoided:  for instance, in
    \begin{align*}
        F =
        \begin{bmatrix}
        z_1 + z_2^2\\
        z_2^2
        \end{bmatrix}
        &&
        \tau =
        \begin{bmatrix}
            z_1 + \mathbf{i} & -z\\
            -z & z_1 + \mathbf{i}
        \end{bmatrix} &&
        \varepsilon_{11} = \frac{z_1^2/2 + z_1 z_2^2/3}{z_1 + z_2^2},
    \end{align*}
$\varepsilon_{11}$ is indeterminate at $0$.
\end{rem}

Finally, if we remove the assumption that $F$ and $G$ vanish at $0$, then applying Theorem \ref{TA1} to $F-\mathrm{Re}(F(0))$ and $G-\mathrm{Re}(G(0))$ yields a positive-dimensional complex-analytic variety $Y$ through the origin on which $F$ and $G$ are constant.  (If the sequence $\{x_i\}$ avoids some $\Sigma\subset B$, then $Y$ is obviously not contained in $\Sigma$.) This contradicts the maximal rank assumption on $\nabla R$ and yields the alternative proof claimed at the start of this appendix.

\bibliographystyle{alpha}
\bibliography{AAKrefs.bib}
\end{document}